\def\demo{\noindent{\it Proof. }}
\DeclareMathOperator{\depth}{depth}
\begin{document}

\title*{Symbolic powers of monomial ideals and Cohen-Macaulay vertex-weighted digraphs}
\titlerunning{Monomial ideals and Cohen-Macaulay digraphs} 

\author{Philippe Gimenez, Jos\'e Mart\'\i nez-Bernal, Aron Simis,
Rafael H. Villarreal, and Carlos E. Vivares\\
\ \ \\ 
\ \ \it Dedicated to Professor Antonio Campillo on the occasion of his
$65$th birthday}
\authorrunning{Gimenez, Mart\'\i nez-Bernal, Simis, Villarreal,
Vivares} 

\institute{Philippe Gimenez \at Instituto de Investigaci\'on en Matem\'aticas de la Universidad de
Valladolid (IMUVA), 
Facultad de Ciencias, 47011 Valladolid, Spain, 
\email{pgimenez@agt.uva.es} \and Jos\'e Mart\'\i nez-Bernal \at 
Departamento de
Matem\'aticas, Centro de Investigaci\'on y de Estudios Avanzados del
IPN, Apartado Postal 14--740, 07000 Mexico City, \email{jmb@math.cinvestav.mx} \and Aron
Simis \at Departamento de Matem\'atica, Universidade Federal de
Pernambuco, 50740-560 Recife, PE, Brazil, \email{aron@dmat.ufpe.br} \and 
Rafael H. Villarreal \at Departamento de
Matem\'aticas, Centro de Investigaci\'on y de Estudios Avanzados del
IPN, Apartado Postal 14--740, 07000 Mexico City,  
\email{vila@math.cinvestav.mx} \and 
Carlos E. Vivares \at Departamento de
Matem\'aticas, Centro de Investigaci\'on y de Estudios Avanzados del
IPN, Apartado Postal 14--740, 07000 Mexico City, 
\email{cevivares@math.cinvestav.mx} 
}

\maketitle

\abstract{In this paper we study irreducible representations
and symbolic Rees algebras of monomial ideals. Then we examine edge
ideals associated to vertex-weighted oriented graphs. 
These are digraphs 
having no oriented cycles of length two with
weights on the vertices. 
For a monomial ideal with no embedded
primes we classify the normality of its symbolic Rees algebra
in terms of its primary components. 
If the primary components of a
monomial ideal are normal, we present a simple procedure to compute
its symbolic Rees algebra using Hilbert bases, and give necessary and
sufficient conditions for the equality between its ordinary and
symbolic powers. We give an effective characterization 
of the Cohen--Macaulay vertex-weighted oriented forests. For edge
ideals of transitive 
weighted oriented graphs we show that Alexander duality holds. It is 
shown that edge ideals of weighted acyclic tournaments are Cohen--Macaulay
and satisfy Alexander duality.}

\section{Introduction}\label{intro-section}

Let $R=K[x_1,\ldots,x_n]$ be a polynomial ring over a field $K$ and
let $I\subset R$ be a monomial ideal. 
The {\it Rees algebra\/} of $I$ is
$$
R[It]:=R\oplus It\oplus\cdots\oplus I^{k}t^k\oplus\cdots
\subset R[t],
$$
where $t$ is a new variable, and the 
{\it symbolic Rees algebra} of $I$ is
$$
R_s(I):=R\oplus
I^{(1)}t\oplus\cdots\oplus I^{(k)}t^k\oplus\cdots\subset R[t],
$$
where $I^{(k)}$ is the $k$-th symbolic power of $I$ (see
Definition~\ref{symbolic-power-def}). 

One of the early works on symbolic powers of monomial ideal is
\cite{aron-hoyos}.  Symbolic powers of ideals and edge ideals of
graphs where studied in \cite{bahiano}. A method to compute symbolic
powers of radical ideals in characteristic zero is given in
\cite{aron-symbolic}.  

In Section~\ref{irreducible-deco} we 
recall the notion of irreducible decomposition of a monomial ideal
and prove 
that the exponents of the variables that occur in the minimal
generating set of a monomial ideal $I$ are exactly the exponents of the
variables that occur in the minimal generators of the irreducible
components of $I$ (Lemma~\ref{duality-of-exponents}). This 
result indicates that the well known Alexander duality for squarefree
monomial ideals could also hold for other families of monomial ideals.

We give algorithms to compute the symbolic powers of monomial 
ideals using {\it Macaulay\/}$2$ \cite{mac2}  
(Lemma~\ref{anoth-one-char-spow-general},
Remarks~\ref{symbolic-powers-algorithm} and
\ref{Tai-symbolic-powers-algorithm}). For a monomial ideal with no
embedded 
primes we classify the normality of its symbolic Rees algebra
in terms of the normality of its primary components
(Proposition~\ref{aug5-17}). 

The normality of a monomial ideal is well understood from the
computational point of view. If $I$ is minimally
generated by $x^{v_1},\ldots,x^{v_r}$ and $A$ is the matrix with
column vectors $v_1^{\rm t},\ldots,v_r^{\rm t}$, then $I$ is normal if and only if
the system $xA\geq\mathbf{1};x\geq 0$ has the integer rounding
property \cite[Corollary 2.5]{poset}. 
The normality of $I$ can be determined using the 
program {\it Normaliz\/} \cite{Normaliz}. For the normality of
monomial ideals of dimension $2$ see \cite{crispin-quinonez,icmi} and
the references therein.

To compute the generators of the symbolic Rees algebra of a monomial
ideal one can use the
algorithm in the proof of 
\cite[Theorem~1.1]{herzog-hibi-trung}. 
If the primary components of a
monomial ideal are normal, we present a procedure that 
computes the generators of its symbolic Rees algebra using Hilbert bases
and {\it Normaliz\/} \cite{Normaliz} (Proposition~\ref{aug6-17}, 
Example~\ref{symbolic-normaliz}), and give necessary and
sufficient conditions for the equality between its ordinary and
symbolic powers (Corollary~\ref{oct16-17}). 

In Section~\ref{cm-digraphs} we study edge ideals of weighted
oriented graphs. A {\it directed graph\/} or {\it digraph\/} $\mathcal{D}$ consists of a finite set 
$V({\mathcal D})$ of vertices, together with a prescribed collection
$E({\mathcal D})$ of ordered pairs of distinct points called {\it
edges\/} or {\it arrows\/}. An
{\it oriented graph\/} is a digraph having
no oriented cycles of length two. In other words an oriented graph
$\mathcal{D}$ is a simple graph $G$ together with an orientation of its
edges. We call $G$ the {\it underlying graph\/} of $\mathcal{D}$. If a
digraph $\mathcal{D}$ is endowed with a function $d\colon 
V(\mathcal{D})\rightarrow\mathbb{N}_+$, where
$\mathbb{N}_+:=\{1,2,\ldots\}$, we call $\mathcal{D}$ a
{\it vertex-weighted digraph\/}. 

Edge ideals of edge-weighted graphs were introduced and studied by Paulsen and
Sather-Wagstaff \cite{pausen-sean-sather-wagstaff}. In this work 
we consider edge ideals of graphs which are oriented and have weights
on the vertices. In what follows by a weighted
oriented graph we shall always mean a vertex-weighted oriented graph. 

Let $\mathcal{D}$ be a
vertex-weighted digraph with vertex set $V(\mathcal{D})=\{x_1,\ldots,x_n\}$.
The weight $d(x_i)$ of $x_i$ is denoted simply by $d_i$. 
The {\it edge ideal\/} of $\mathcal{D}$, denoted 
$I(\mathcal{D})$, is the ideal of $R$ given by
$$
I(\mathcal{D}):=(x_ix_j^{d_j}\, \vert\, (x_i,x_j)\in E(\mathcal{D})).
$$

If a vertex $x_i$ of $\mathcal{D}$ is a {\it source\/} (i.e., has
only arrows leaving $x_i$) we shall always 
assume $d_i=1$ because in this case the definition of $I(\mathcal{D})$
 does not depend on the weight of $x_i$. In the special case when
$d_i=1$ for all $i$, we recover the edge ideal of the graph $G$ 
which has been extensively studied in the literature
\cite{Dao-Huneke-schweig,francisco-ha-mermin,graphs-rings,HaM,Herzog-Hibi-book,
edge-ideals,ITG,chapter-vantuyl,Vi2,monalg-rev}. 
A vertex-weighted digraph $\mathcal{D}$ is called
{\it Cohen--Macaulay\/} (over the field $K$) if $R/I(\mathcal{D})$ is a
Cohen--Macaulay 
ring. 

Using a result of \cite{depth-monomial}, 
we answer a question of Aron Simis and a related question of Antonio
Campillo by showing that an oriented graph $\mathcal{D}$ is Cohen--Macaulay if and
only if the oriented graph $\mathcal{U}$, obtained from $\mathcal{D}$ by replacing 
each weight $d_i>3$ with $d_i=2$, is Cohen--Macaulay
(Corollary~\ref{oct20-17}). Seemingly, this ought to somewhat
facilitate the verification of this property. 

It turns out that edge ideals of weighted acyclic tournaments are Cohen--Macaulay
and satisfy Alexander duality (Corollaries~\ref{apr22-17} and \ref{jun1-17}). For transitive 
weighted oriented graphs it is shown that Alexander duality holds 
(Theorem~\ref{alexander-duality-transitive}). Edge
ideals of weighted digraphs arose in the theory of Reed-Muller codes as initial
ideals of vanishing ideals of projective spaces over finite
fields \cite{carvalho-lopez-lopez,Ha-Lin-Morey-Reyes-Vila,hilbert-min-dis}. 

A major result of Pitones, Reyes and
Toledo \cite{PRT} shows an explicit combinatorial expression for the irredundant 
decomposition of $I(\mathcal{D})$ as a finite intersection of irreducible
monomial ideals (Theorem~\ref{pitones-reyes-toledo}). We will
use their result to prove the following explicit combinatorial classification of
all Cohen--Macaulay weighted oriented forests.

\noindent {\bf Theorem~\ref{cm-weighted-oriented-trees}}{\it\  
Let $\mathcal{D}$ be a weighted oriented forest without isolated
vertices and let $G$ be its underlying forest. The following
conditions are equivalent:
\begin{enumerate}
\item[$(a)$] $\mathcal{D}$ is Cohen--Macaulay.
\item[$(b)$] $I(\mathcal{D})$ is unmixed, that is, all its associated
primes have the same height.
\item[$(c)$] $G$ has a 
perfect matching $\{x_1,y_1\},\ldots,\{x_r,y_r\}$ so that 
$\deg_G(y_i)=1$ for $i=1,\ldots, r$ and $d(x_i)=d_i=1$ if
$(x_i,y_i)\in E(\mathcal{D})$.
\end{enumerate}}
 
All rings considered here are Noetherian. For all unexplained
terminology and additional information,  we refer to 
\cite{digraphs} for the theory of digraphs, 
and \cite{graphs-rings,Herzog-Hibi-book,edge-ideals,monalg-rev} for the theory of
edge ideals of graphs and monomial ideals.

\section{Irreducible decompositions and symbolic powers}\label{irreducible-deco}

In this section we study irreducible representations of monomial
ideals and various aspects of symbolic Rees algebras of monomial ideals. Here we continue to employ
the notation and definitions used in Section~\ref{intro-section}.

Recall that an ideal $L$ of a Noetherian ring $R$ is called {\it irreducible} if
$L$ cannot be written as an intersection of two ideals of $R$ that
properly contain $L$. Let $R=K[x_1,\ldots,x_n]$ be a polynomial ring
over a field $K$. Up to permutation of variables the irreducible 
monomial ideals of $R$ are of the form
$$
(x_1^{a_1},\ldots,x_r^{a_r}),
$$
where $a_1,\ldots,a_r$ are positive integers. According to
\cite[Theorem~6.1.17]{monalg-rev} any monomial ideal $I$
of $R$ has a {\it unique\/} irreducible decomposition:
$$ 
I=I_1\cap\cdots\cap I_m,
$$
where $I_1,\ldots,I_m$ are irreducible monomial ideals and 
$I\neq\cap_{i\neq j}I_i$ for  $j=1,\ldots,m$, that is, 
this decomposition is irredundant. The ideals $I_1,\ldots,I_m$ are called
 the {\it irreducible components\/} of $I$. 

By \cite[Proposition~6.1.7]{monalg-rev} a monomial ideal
$\mathfrak{I}$ is 
a primary ideal if and only if, after permutation of the 
variables, it has the form:
\begin{equation}\label{primary-monomial-ideal}
\mathfrak{I}=(x_1^{a_1},\ldots,x_r^{a_r},x^{b_1},\ldots,x^{b_s}),
\end{equation}
where $a_i\geq 1$ and $\cup_{i=1}^s{\rm supp}(x^{b_i})\subset
\{x_1,\ldots,x_r\}$. Thus if $\mathfrak{I}$ is a
monomial primary ideal, then $\mathfrak{I}^k$ 
is a primary ideal for $k\geq 1$. 
Since irreducible ideals are
primary, the irreducible decomposition of $I$ is a 
primary decomposition of $I$.  
Notice that the irreducible decomposition of $I$ is not necessarily
a minimal primary decomposition, that is, $I_i$ and $I_j$ could have the same radical for
$i\neq j$. If $I$ is a squarefree monomial ideal, its 
irreducible decomposition is minimal. For edge ideals of
weighted oriented graphs one also has that their irreducible
decompositions are minimal \cite{PRT}.

\begin{definition} An irreducible monomial ideal $L\subset R$ is 
called a {\it minimal irreducible\/} ideal of $I$ if
$I\subset L$ and for any
irreducible monomial ideal $L'$ such that $I\subset L'\subset L$ one
has that $L=L'$. 
\end{definition}


\begin{proposition}\label{apr23-17} If $I=I_1\cap\cdots\cap I_m$ is the irreducible
decomposition of a monomial ideal $I$, then $I_1,\ldots,I_m$ are 
the minimal irreducible monomial ideals of $I$. 
\end{proposition}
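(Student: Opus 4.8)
The plan is to deduce the statement from a single ``meet-prime'' property of irreducible monomial ideals, namely: if $L$ is an irreducible monomial ideal and $J_1\cap\cdots\cap J_k\subseteq L$ for monomial ideals $J_1,\dots,J_k$, then $J_t\subseteq L$ for some $t$. Writing $L=(x_{i_1}^{c_1},\dots,x_{i_s}^{c_s})$, recall that a monomial $m$ lies in $L$ if and only if some $x_{i_\ell}^{c_\ell}$ divides $m$. I would argue by contradiction: if $J_t\not\subseteq L$ for every $t$, pick a monomial $m_t\in J_t\setminus L$, so that the exponent of $x_{i_\ell}$ in $m_t$ is strictly less than $c_\ell$ for each $\ell$. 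The crucial construction is to take $m=\operatorname{lcm}(m_1,\dots,m_k)$: since each $m_t$ divides $m$ and each $J_t$ is a monomial ideal, $m\in J_1\cap\cdots\cap J_k\subseteq L$, while the exponent of $x_{i_\ell}$ in $m$ equals $\max_t(\text{exponent in }m_t)<c_\ell$, so $m\notin L$, a contradiction. I expect this lcm step to be the heart of the argument.

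I would also record the elementary fact that in an irredundant irreducible decomposition no component contains another: if $I_k\subseteq I_j$ with $k\ne j$, then $\bigcap_{i\ne j}I_i\subseteq I_k\subseteq I_j$, whence $\bigcap_{i\ne j}I_i=I$, contradicting irredundancy.

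With these two ingredients the proposition splits into two inclusions. To see each $I_j$ is a minimal irreducible ideal of $I$, note that $I\subseteq I_j$ is clear, and if $L$ is irreducible with $I\subseteq L\subseteq I_j$, then applying the meet-prime property to $I=I_1\cap\cdots\cap I_m\subseteq L$ yields $I_i\subseteq L$ for some $i$; if $i\ne j$ this gives $I_i\subseteq L\subseteq I_j$, contradicting the no-containment fact, so $i=j$ and $I_j\subseteq L\subseteq I_j$ forces $L=I_j$. Conversely, if $L$ is any minimal irreducible ideal of $I$, then $I\subseteq L$ and the meet-prime property gives $I_i\subseteq L$ for some $i$; since $I\subseteq I_i\subseteq L$ with $I_i$ irreducible, minimality of $L$ forces $L=I_i$. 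Together these show that the set $\{I_1,\dots,I_m\}$ coincides with the set of minimal irreducible ideals of $I$.

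The main obstacle is really just isolating and proving the meet-prime property; once the lcm construction is in hand, both inclusions are formal, and the only subtlety is remembering to invoke irredundancy (through the no-containment fact) in order to rule out $i\ne j$ in the first inclusion.
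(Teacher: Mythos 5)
Your proof is correct and follows essentially the same route as the paper: the key step in both is to take the lcm of monomials chosen from each component outside $L$ and derive a contradiction from the fact that this lcm lies in $I\subseteq L$, and both then invoke irredundancy to identify the components with the minimal irreducible ideals. Your isolation of this as a general ``meet-prime'' property of irreducible monomial ideals (for arbitrary monomial ideals $J_t$, not just the irreducible components) is a slightly cleaner packaging of the same argument.
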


\begin{proof} Let $L$ be an irreducible ideal that contains $I$. 
Then $I_i\subset L$ for some $i$. Indeed if $I_i\not\subset L$ for all
$i$, for each $i$ pick $x_{j_i}^{a_{j_i}}$ in $I_i\setminus L$.
Since $I\subset L$, setting $x^a={\rm lcm}\{x_{j_i}^{a_{j_i}}\}_{i=1}^m$ and writing
$L=(x_{k_1}^{c_{k_1}},\ldots,x_{k_\ell}^{c_{k_\ell}})$, it follows
that $x^a$ is in $I$ and $x_{j_i}^{a_{j_i}}$ is a multiple of $x_{k_t}^{c_{k_t}}$ for some
$1\leq i\leq m$ and $1\leq t\leq \ell$. Thus $x_{j_i}^{a_{j_i}}$ is in
$L$, a contradiction. Therefore if $L$ is minimal one has $L=I_i$ for
some $i$. To complete the proof notice that $I_i$ is a minimal irreducible
monomial ideal of $I$ for all $i$. This follows from the first part of
the proof using that $I=I_1\cap\cdots\cap I_m$ is an irredundant
decomposition. 
\qed
\end{proof}

The unique minimal set of generators of a monomial ideal $I$, 
consisting of monomials, is denoted by $G(I)$. The next result tells us
that in certain cases we may have a sort of Alexander duality
obtained by switching the roles of minimal generators and irreducible
components \cite[Theorem 6.3.39]{monalg-rev} (see
Example~\ref{example-alex-duality} and
Theorem~\ref{alexander-duality-transitive}).    

\begin{lemma}\label{duality-of-exponents} Let $I$ be a monomial ideal of $R$, with 
$G(I)=\{x^{v_1},\ldots,x^{v_r}\}$ and
$v_i=(v_{i1},\ldots,v_{in})$ for $i=1,\ldots,r$, and let
$I=I_1\cap\cdots\cap I_m$ be its irreducible decomposition. Then
$$
V:=\{x_j^{v_{ij}}\vert\, v_{ij}\geq 1\}=G(I_1)\cup\cdots\cup G(I_m).
$$ 
\end{lemma}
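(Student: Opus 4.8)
The plan is to prove the two inclusions separately, taking advantage of the fact that both sides of the asserted equality are sets of \emph{pure powers} of single variables. The right-hand side is such because every irreducible monomial ideal $I_s$ is, up to relabeling, of the form $(x_j^{c_{sj}}\colon j\in S_s)$ with $c_{sj}\geq 1$, so $G(I_s)=\{x_j^{c_{sj}}\colon j\in S_s\}$; and $V$ is a set of pure powers by definition. Thus it suffices to show, for each variable $x_j$ and each integer $c\geq 1$, that $x_j^c$ occurs as some $x_j^{v_{ij}}$ in $V$ if and only if $x_j^c\in G(I_s)$ for some $s$. Throughout I would use the two basic membership criteria: a monomial $x^a$ lies in $I$ iff $a\geq v_i$ componentwise for some $i$, and $x^a\in I$ iff $x^a\in I_s$ for every $s$ since $I=\bigcap_s I_s$; here membership of $x^a$ in $I_s$ means precisely that $a_j\geq c_{sj}$ for some $j\in S_s$.

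For the inclusion $V\subseteq G(I_1)\cup\cdots\cup G(I_m)$, I would start from a generator $x^{v_i}\in G(I)$ with $v_{ij}\geq 1$. Minimality of $G(I)$ forces $x^{v_i}/x_j\notin I$, hence $x^{v_i}/x_j\notin I_s$ for some component $I_s$, while of course $x^{v_i}\in I\subseteq I_s$. Comparing these two facts through the membership criterion for $I_s$ is the crux: $x^{v_i}$ must be divisible by some generator $x_k^{c_{sk}}$, and because lowering only the $j$-th exponent destroys membership, that generator can only be $x_j^{c_{sj}}$; reading off $v_{ij}\geq c_{sj}$ (from $x^{v_i}\in I_s$) together with $v_{ij}\leq c_{sj}$ (from $x^{v_i}/x_j\notin I_s$) pins down $v_{ij}=c_{sj}$, so $x_j^{v_{ij}}\in G(I_s)$.

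For the reverse inclusion I would fix a generator $x_j^c=x_j^{c_{sj}}\in G(I_s)$ and aim to produce a minimal generator of $I$ whose $j$-th exponent is exactly $c$. The device is to build a witness monomial $x^a$ with $a_j=c-1$ satisfying $x^a\notin I$ but $x^a x_j\in I$: from such a monomial, any minimal generator dividing $x^a x_j$ but not $x^a$ is forced, by a componentwise comparison, to have $j$-th exponent equal to $c$. To construct $x^a$ I would invoke the irredundancy of the decomposition, equivalently Proposition~\ref{apr23-17}: there is a monomial $x^b\in\bigcap_{t\neq s}I_t$ with $x^b\notin I_s$, and $x^b\notin I_s$ automatically forces all its $S_s$-exponents below the corresponding $c_{sk}$, in particular $b_j\leq c-1$. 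Resetting only the $j$-th exponent of $x^b$ to $c-1$ yields $x^a$, a multiple of $x^b$, which therefore still lies in every $I_t$ with $t\neq s$ and still avoids $I_s$; meanwhile $x^a x_j$ acquires $j$-th exponent $c=c_{sj}$ and hence enters $I_s$ as well, so $x^a x_j\in\bigcap_t I_t=I$.

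I expect the reverse inclusion to be the main obstacle, and within it the construction of the witness monomial $x^a$. The delicate point is to remain outside $I_s$ while simultaneously lying in all the other components $I_t$; this is exactly where irredundancy is indispensable, and one must keep in mind that $I_s$ need not involve all the variables, so membership in $I_s$ is governed only by the coordinates in $S_s$. Once the correct $x^a$ is in hand, the extraction of a minimal generator with the prescribed exponent is a routine divisibility comparison, identical in spirit to the argument used for the first inclusion.
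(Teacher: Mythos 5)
Your proof is correct. For the inclusion $V\subseteq G(I_1)\cup\cdots\cup G(I_m)$ you and the paper run essentially the same computation: minimality of $G(I)$ gives $x^{v_i}/x_j\notin I$, and comparing this with $x^{v_i}\in I_s$ inside a suitable component pins down the $j$-th exponent. The paper organizes it as a contradiction (assuming $x_j^{v_{ij}}$ lies in no $G(I_s)$ and deducing $x^{v_i}/x_j\in I$), whereas you pass directly to a component $I_s$ excluding $x^{v_i}/x_j$, which is a cleaner arrangement of the same idea; the one point worth making explicit is that such an $I_s$ necessarily involves the variable $x_j$, since otherwise dividing by $x_j$ could not destroy membership in $I_s$. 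The reverse inclusion is where you genuinely diverge. The paper argues by contradiction at the level of ideals: if $x_1^{a_1}\in G(I_1)$ but $x_1^{a_1}\notin V$, then $I\subseteq(x_1^{a_1+1},x_2^{a_2},\ldots,x_\ell^{a_\ell})\subsetneq I_1$, contradicting the minimality of $I_1$ among irreducible ideals containing $I$ (Proposition~\ref{apr23-17}). You instead work at the level of monomials: irredundancy supplies $x^b\in\bigcap_{t\neq s}I_t\setminus I_s$, resetting its $j$-th exponent to $c_{sj}-1$ yields a witness $x^a\notin I$ with $x^ax_j\in I$, and any minimal generator of $I$ dividing $x^ax_j$ but not $x^a$ is forced to have $j$-th exponent exactly $c_{sj}$. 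Both routes rest on irredundancy; the paper's buys brevity by citing Proposition~\ref{apr23-17}, while yours is constructive --- it exhibits an element of $G(I)$ realizing the exponent $c_{sj}$ --- and is self-contained modulo the irredundancy of the decomposition.
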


\begin{proof} ``$\subset$'': Take $x_j^{v_{ij}}$ in $V$, without loss
of generality we may assume $i=j=1$. We proceed by contradiction
assuming that $x_1^{v_{11}}$ is not in $\cup_{i=1}^mG(I_i)$. Setting 
$M=x_1^{v_{11}-1}x_2^{v_{12}}\cdots x_n^{v_{1n}}$, notice that $M$ is
in $I$. Indeed for any $I_j$ not containing $x_2^{v_{12}}\cdots
x_n^{v_{1n}}$, one has that $x_1^{v_{11}}$ is in $I_j$ because
$x^{v_1}$ is in $I$. Thus there is $x_1^{c_j}$ in $G(I_j)$ such that 
$v_{11}> c_j\geq 1$ because $x_1^{v_{11}}$ is not in $G(I_j)$. Thus
$M$ is in $I_j$. This proves that $M$ is in $I$, 
a contradiction to the minimality 
of $G(I)$ because this monomial that strictly divides one of the
elements of $G(I)$ cannot be in $I$. Thus $x_1^{v_{11}}$ is in $\cup_{i=1}^mG(I_i)$, as
required.

``$\supset$'': Take $x_j^{a_j}$ in $G(I_i)$ for some $i,j$, without
loss of generality we may assume that $i=j=1$ and
$G(I_1)=\{x_1^{a_1},\ldots,x_\ell^{a_\ell}\}$. We proceed by
contradiction assuming that $x_1^{a_1}\notin V$. Setting
$L=(x_1^{a_1+1},x_2^{a_2},\ldots,x_\ell^{a_\ell})$, notice that
$I\subset L$. Indeed take any monomial $x^{v_k}$ in $G(I)$ which is
not in $(x_2^{a_2},\ldots,x_\ell^{a_\ell})$. Then $x^{v_k}$ is a
multiple of $x_1^{a_1}$ because $I\subset I_1$. Hence $v_{k1}>a_1$
because $x_1^{a_1}\notin V$. Thus $x^{v_k}$ is in
$L$. This proves that $I\subset L\subsetneq I_1$, a contradiction to
the fact that $I_1$ is a minimal irreducible monomial ideal of $I$
(see Proposition~\ref{apr23-17}). \qed
\end{proof}

Let $I\subset R$ be a monomial ideal. The {\it Alexander
dual\/} of $I$, denoted $I^\vee$, is the ideal of $R$ generated by all 
monomials $x^a$, with $a=(a_1,\ldots,a_n)$, such that 
$\{x_i^{a_i}\vert\, a_i\geq 1\}$ is equal to $G(L)$ for some minimal irreducible
ideal $L$ of $I$. 
The {\it dual\/} of $I$, denoted $I^*$, is the
intersection of all ideals $(\{x_i^{a_i}\vert\, a_i\geq 1\})$ such
that $x^a\in G(I)$. Thus one has
$$
I^\vee=\left(\prod_{f\in G(I_1)}\!\!f,\ \ldots,\prod_{f\in
G(I_m)}\!\!f\right)\ \mbox{ and }\ I^*=\bigcap_{x^a\in
G(I)}(\{x_i^{a_i}\vert\, a_i\geq 1\}),
$$
where $I_1,\ldots,I_m$ are the irreducible components of $I$. 
If $I^*=I^\vee$, we say that
{\it Alexander duality\/} holds for $I$. There are other related ways
introduced by Ezra Miller
\cite{hosten-smith,ezra-miller-1,ezra-miller,cca} 
to define the Alexander dual of a monomial ideal . It is well known that 
$I^*=I^\vee$ for squarefree monomial ideals \cite[Theorem
6.3.39]{monalg-rev}.   

\begin{definition}\label{symbolic-power-def}\rm 
Let $I$ be an ideal of a ring $R$ and 
let $\mathfrak{p}_1,\ldots,{\mathfrak p}_r$ be  
the minimal primes of $I$. Given an integer $k\geq 1$, we define 
the $k$-th {\it symbolic power} of 
$I$ to be the ideal 
$$
I^{(k)}:=\bigcap_{i=1}^r\mathfrak{q}_i=\bigcap_{i=1}^r
(I^kR_{\mathfrak{p}_i}\cap
R),
$$
where $\mathfrak{q}_i$ is the ${\mathfrak p}_i$-primary component of
$I^k$. 
\end{definition}

In other words, one has $I^{(k)}=S^{-1}I^k\cap
R$, where $S=R\setminus\cup_{i=1}^r{\mathfrak p}_i$. An alternative
notion of symbolic 
power can 
be introduced 
using the whole set of associated
primes of $I$ instead 
(see, e.g., \cite{cooper-etal,symbolic-powers-survey}): 
$$
I^{\langle k\rangle} =\bigcap_{\mathfrak{p}\in {\rm Ass}(R/I)}(I^kR_\mathfrak{p}\cap
R)=\bigcap_{\mathfrak{p}\in {\rm maxAss}(R/I)}
(I^kR_\mathfrak{p}\cap
R),
$$
where ${\rm maxAss}(R/I)$ is the set of associated primes which are
maximal with respect to inclusion \cite[Lemmas~3.1 and 3.2]{cooper-etal}.
Clearly $I^k\subset I^{\langle k\rangle}\subset I^{(k)}$. 
If $I$ has no embedded primes, e.g. for radical ideals such as 
squarefree monomial ideals, the two last definitions of symbolic powers
coincide. An interesting problem is to give necessary and sufficient conditions
for the equality ``$I^k=I^{(k)}$ for $k \geq 1$''.

For prime ideals the $k$-th symbolic powers and
the $k$-th usual powers are not always equal. Thus the next lemma 
does not hold in general but the proof below shows that it will hold 
for an ideal $I$ in Noetherian 
ring $R$ under the assumption that
$\mathfrak{I}_i^{k}=\mathfrak{I}_i^{(k)}$ for $i=1,\ldots,r$. 
The next lemma is well known for radical monomial ideals
\cite[Propositions~3.3.24 and 7.3.14]{monalg}.

\begin{lemma}\label{anoth-one-char-spow-general}
Let $I\subset R$ be a monomial ideal and let 
$I=\mathfrak{I}_1\cap\cdots\cap\mathfrak{I}_r\cap\cdots\cap\mathfrak{I}_m$
be an irredundant minimal primary decomposition of $I$, where
$\mathfrak{I}_1,\ldots,\mathfrak{I}_r$ 
are the primary components associated to the minimal primes of $I$.
Then 
$$
I^{(k)}={\mathfrak I}_1^{k}\cap\cdots\cap {\mathfrak I}_r^{k}\ 
\mbox{ for }\ k\geq 1.
$$
\end{lemma}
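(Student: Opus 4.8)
The plan is to verify the identity one localization at a time. Write $\mathfrak{p}_i=\sqrt{\mathfrak{I}_i}$ for the minimal primes of $I$, so that $\mathfrak{p}_1,\ldots,\mathfrak{p}_r$ are exactly these primes. By Definition~\ref{symbolic-power-def} one has $I^{(k)}=\bigcap_{i=1}^r(I^kR_{\mathfrak{p}_i}\cap R)$, so it suffices to prove that $I^kR_{\mathfrak{p}_i}\cap R=\mathfrak{I}_i^{k}$ for each $i=1,\ldots,r$; intersecting these $r$ identities then gives the result at once.

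The key computation is to identify $IR_{\mathfrak{p}_i}$. Since localization is exact and commutes with finite intersections, localizing $I=\mathfrak{I}_1\cap\cdots\cap\mathfrak{I}_m$ at $\mathfrak{p}_i$ yields $IR_{\mathfrak{p}_i}=\bigcap_{j=1}^m\mathfrak{I}_jR_{\mathfrak{p}_i}$. For every $j\neq i$ I would show $\mathfrak{I}_jR_{\mathfrak{p}_i}=R_{\mathfrak{p}_i}$, which holds iff $\mathfrak{I}_j\not\subseteq\mathfrak{p}_i$, equivalently (taking radicals, as $\mathfrak{p}_i$ is prime) iff $\mathfrak{p}_j\not\subseteq\mathfrak{p}_i$. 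If $j\leq r$ with $j\neq i$, this is the incomparability of distinct minimal primes. If $j>r$, so $\mathfrak{p}_j$ is embedded, then $\mathfrak{p}_j$ contains some minimal prime $\mathfrak{p}_\ell$; were $\mathfrak{p}_j\subseteq\mathfrak{p}_i$ we would get $\mathfrak{p}_\ell\subseteq\mathfrak{p}_i$, forcing $\mathfrak{p}_\ell=\mathfrak{p}_i=\mathfrak{p}_j$ and contradicting that the associated primes of a minimal primary decomposition are pairwise distinct. Hence $IR_{\mathfrak{p}_i}=\mathfrak{I}_iR_{\mathfrak{p}_i}$, and therefore $I^kR_{\mathfrak{p}_i}=\mathfrak{I}_i^{k}R_{\mathfrak{p}_i}$.

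It then remains to contract back to $R$, that is, to compute $\mathfrak{I}_i^{k}R_{\mathfrak{p}_i}\cap R$. Because $\mathfrak{I}_i$ is a monomial primary ideal, its power $\mathfrak{I}_i^{k}$ is again $\mathfrak{p}_i$-primary, as recalled just after~(\ref{primary-monomial-ideal}); and for any $\mathfrak{p}_i$-primary ideal $Q$ one has $QR_{\mathfrak{p}_i}\cap R=Q$, since $sa\in Q$ with $s\notin\mathfrak{p}_i=\sqrt{Q}$ forces $a\in Q$. Applying this with $Q=\mathfrak{I}_i^{k}$ gives $I^kR_{\mathfrak{p}_i}\cap R=\mathfrak{I}_i^{k}$, which is precisely the reduction isolated at the start.

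The only genuinely delicate point is the localization step that discards the components $\mathfrak{I}_j$ with $j\neq i$, where the distinction between minimal and embedded primes must be handled; the rest is formal. I note finally that the argument uses only the hypothesis $\mathfrak{I}_i^{(k)}=\mathfrak{I}_i^{k}$ for $i=1,\ldots,r$, which for monomial ideals is automatic from the primariness of $\mathfrak{I}_i^{k}$; the same proof therefore applies verbatim in any Noetherian ring under that assumption, as remarked before the statement.
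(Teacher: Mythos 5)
Your proposal is correct and follows essentially the same route as the paper: localize at each minimal prime $\mathfrak{p}_i$, identify $I^kR_{\mathfrak{p}_i}$ with $\mathfrak{I}_i^kR_{\mathfrak{p}_i}$, and contract back using the fact that powers of a monomial $\mathfrak{p}_i$-primary ideal remain $\mathfrak{p}_i$-primary. The only difference is one of exposition: you spell out why the components $\mathfrak{I}_j$ with $j\neq i$ blow up under localization and contract directly via $QR_{\mathfrak{p}}\cap R=Q$, whereas the paper routes the same contraction through the $\mathfrak{p}_i$-primary component $\mathfrak{q}_i$ of $I^k$.
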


\demo 
Let $\mathfrak{p}_1,\ldots,\mathfrak{p}_r$ be the
minimal primes of $I$. By
\cite[Proposition~6.1.7]{monalg-rev} any power of $\mathfrak{I}_i$ is
again a $\mathfrak{p}_i$-primary ideal (see
Eq.~(\ref{primary-monomial-ideal}) at the beginning of this section). Thus
$\mathfrak{I}_i^{k}=\mathfrak{I}_i^{(k)}$ for any $i,k$. Fixing 
integers $k\geq 1$ and $1\leq i\leq r$, let 
$$I^k=\mathfrak{q}_1\cap\cdots\cap \mathfrak{q}_r
\cap \cdots\cap \mathfrak{q}_s$$
be a primary decomposition of $I^k$, where 
$\mathfrak{q}_j$ is ${\mathfrak p}_j$-primary for $j\leq r$. 
Localizing at 
${\mathfrak p}_i$ yields 
$I^kR_{{\mathfrak p}_i}=\mathfrak{q}_iR_{{\mathfrak p}_i}$ and 
from
$I=\mathfrak{I}_1\cap\cdots\cap\mathfrak{I}_r\cap\cdots\cap\mathfrak{I}_m$  
one obtains: 
\[
I^kR_{{\mathfrak p}_i}=(IR_{{\mathfrak p}_i})^k=
(\mathfrak{I}_iR_{{\mathfrak p}_i})^k=
\mathfrak{I}_i^kR_{{\mathfrak p}_i}.
\]
Thus $\mathfrak{I}_i^kR_{{\mathfrak p}_i}=
\mathfrak{q}_iR_{{\mathfrak p}_i}$ and contracting to $R$ 
one has $\mathfrak{I}_i^{(k)}=\mathfrak{q}_i$. Therefore 
$$
I^{(k)}={\mathfrak I}_1^{(k)}\cap\cdots\cap {\mathfrak I}_r^{(k)}
={\mathfrak I}_1^{k}\cap\cdots\cap {\mathfrak I}_r^{k}.\eqno\qed
$$

It was pointed out to us 
by Ng\^{o} Vi\^{e}t Trung that
Lemma~\ref{anoth-one-char-spow-general} 
is a consequence of
\cite[Lemma~3.1]{herzog-hibi-trung}. This lemma also follows from 
\cite[Proposition~3.6]{cooper-etal}.

\begin{remark}\label{symbolic-powers-algorithm} 
To compute the $k$-th symbolic power $I^{(k)}$ of a monomial
ideal $I$ one can use the following procedure for {\em Macaulay\/}$2$ \cite{mac2}.
\begin{small}
\begin{verbatim}
SPG=(I,k)->intersect(for n from 0 to #minimalPrimes(I)-1 
list localize(I^k,(minimalPrimes(I))#n))
\end{verbatim}
\end{small}
\end{remark}

\begin{example}\label{oct23-17} Let $I$ be the ideal
$(x_2x_3,x_4x_5,x_3x_4,x_2x_5,x_1^2x_3,x_1x_2^2)$. Using the procedure
of Remark~\ref{symbolic-powers-algorithm} we obtain 
$I^{(2)}=I^2+(x_1x_2^2x_5,x_1x_2^2x_3)$.
\end{example}

\begin{remark}\label{symbolic-powers-algorithm-ass} 
If one uses ${\rm Ass}(R/I)$ to define the symbolic powers of a
monomial ideal $I$, the following function for {\em Macaulay\/}$2$
\cite{mac2} can be used to compute $I^{\langle k\rangle}$. 
\begin{small}
\begin{verbatim}
SPA=(I,k)->intersect(for n from 0 to #associatedPrimes(I)-1 
list localize(I^k,(associatedPrimes(I))#n))
\end{verbatim}
\end{small}
\end{remark}

\begin{example}\label{example-symbolic-ass} Let $I$ be the ideal 
$(x_1x_2^2,x_3x_1^2,x_2x_3^2)$. Using the procedures
of Remarks~\ref{symbolic-powers-algorithm} and
\ref{symbolic-powers-algorithm-ass}, we obtain
$$
I^{(1)}=I+(x_1x_2x_3)\ \mbox{ and }\ I^{\langle 1\rangle}=I.
$$
\end{example}

\begin{remark} The following formula is useful to study 
the symbolic powers $I^{\langle k\rangle}$ of a monomial ideal $I$ 
\cite[Proposition~3.6]{cooper-etal}:
$$
I^kR_\mathfrak{p}\cap R=(IR_\mathfrak{p}\cap R)^k\ \mbox{ for }\
\mathfrak{p}\in {\rm Ass}(R/I) \ \mbox{ and }\ k\geq 1.
$$
\end{remark}

\begin{definition}\rm An ideal $I$ of a ring $R$ is 
called {\em normally torsion-free}
 if ${\rm Ass}(R/I^k)$ is contained in ${\rm Ass}(R/I)$ for all 
$k\geq 1$. 
\end{definition}

\begin{remark}\label{oct10-17} Let $I$ be an ideal of a ring $R$. If $I$ has no embedded 
primes, then $I$ is normally torsion-free if and only 
if $I^k=I^{(k)}$ for all $k\geq 1$.
\end{remark}

\begin{lemma}{\rm\cite[Lemma 5, Appendix 6]{zariski-samuel-ii}}
\label{ntr-complete-intersection}
 Let $I\subset R$ be an ideal generated by a regular sequence.
Then $I^k$ is unmixed for $k\geq 1$. In particular $I^{k}=I^{(k)}$ 
for $k\geq 1$. 
\end{lemma}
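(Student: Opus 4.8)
The plan is to prove the unmixedness of $I^k$ first and then read off the equality $I^k=I^{(k)}$ as an immediate consequence. Write $I=(f_1,\ldots,f_g)$ with $f_1,\ldots,f_g$ a regular sequence. Since $g=\mathrm{grade}(I)\le\mathrm{ht}(I)\le g$, the last inequality by Krull's height theorem, the ideal $I$ has height $g$, so every prime of $R/I^k$ that is minimal over $I$ has height $g$. Consequently the entire content of the first assertion is that $R/I^k$ has \emph{no embedded primes}.

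For that I would exploit the structure of the associated graded ring. Because $f_1,\ldots,f_g$ is a regular sequence, one has $\mathrm{gr}_I(R)\cong(R/I)[T_1,\ldots,T_g]$, so each graded piece $I^k/I^{k+1}\cong\mathrm{Sym}^k(I/I^2)$ is a free $R/I$-module of rank $\binom{k+g-1}{g-1}$. In our setting $R=K[x_1,\ldots,x_n]$ is Cohen--Macaulay, hence $R/I$ is Cohen--Macaulay of dimension $n-g$, and therefore so is every free $R/I$-module $I^k/I^{k+1}$. Feeding the short exact sequences
$$0\longrightarrow I^k/I^{k+1}\longrightarrow R/I^{k+1}\longrightarrow R/I^k\longrightarrow 0$$
into an induction on $k$, the depth lemma gives $\mathrm{depth}(R/I^{k+1})\ge\min\{\mathrm{depth}(I^k/I^{k+1}),\mathrm{depth}(R/I^k)\}=n-g=\dim(R/I^{k+1})$, so $R/I^{k+1}$ is Cohen--Macaulay as well. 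A Cohen--Macaulay quotient ring is unmixed, whence $I^k$ is unmixed for all $k\ge1$; this is precisely \cite[Lemma~5, Appendix~6]{zariski-samuel-ii}, which may instead be invoked directly when $R$ is an arbitrary Noetherian ring.

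With unmixedness in hand the equality is formal. Since $I^k$ has no embedded primes, its minimal primary decomposition $I^k=\mathfrak{q}_1\cap\cdots\cap\mathfrak{q}_r$ involves only the minimal primes $\mathfrak{p}_1,\ldots,\mathfrak{p}_r$ of $I$, and each $\mathfrak{q}_i$ coincides with the contraction $I^kR_{\mathfrak{p}_i}\cap R$. By Definition~\ref{symbolic-power-def} this yields $I^{(k)}=\bigcap_{i=1}^r(I^kR_{\mathfrak{p}_i}\cap R)=\mathfrak{q}_1\cap\cdots\cap\mathfrak{q}_r=I^k$. Equivalently, $I$ itself is unmixed (the case $k=1$), so $\mathrm{Ass}(R/I^k)\subseteq\mathrm{Min}(I)=\mathrm{Ass}(R/I)$ shows that $I$ is normally torsion-free, and Remark~\ref{oct10-17} then gives $I^k=I^{(k)}$.

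The main obstacle is the unmixedness statement itself, that is, ruling out embedded primes in $R/I^k$. The isomorphism $\mathrm{gr}_I(R)\cong(R/I)[T_1,\ldots,T_g]$ and the resulting inductive depth argument are the crux; everything afterward is bookkeeping. Should one wish to dispense with the Cohen--Macaulay hypothesis on $R$, the purely ideal-theoretic argument of Zariski and Samuel applies over any Noetherian ring and can be cited verbatim.
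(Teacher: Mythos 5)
The paper offers no proof of this lemma at all: it is quoted directly from Zariski--Samuel, so there is nothing internal to compare against, and your write-up is a genuine self-contained argument. In the setting where the paper actually uses the result ($R=K[x_1,\ldots,x_n]$, e.g.\ for vanishing ideals of points), your proof is correct: the chain $g=\mathrm{grade}(I)\le \mathrm{ht}(\mathfrak{p})\le g$ for every minimal prime $\mathfrak{p}$ reduces unmixedness to the absence of embedded primes; the Rees isomorphism $\mathrm{gr}_I(R)\cong (R/I)[T_1,\ldots,T_g]$ makes each $I^k/I^{k+1}$ a nonzero free $R/I$-module of depth $n-g$; the induction along $0\to I^k/I^{k+1}\to R/I^{k+1}\to R/I^k\to 0$ with the depth lemma gives that $R/I^k$ is Cohen--Macaulay, hence unmixed; and the passage from ``no embedded primes'' to $I^k=I^{(k)}$ is exactly what Definition~\ref{symbolic-power-def} requires. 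What this buys over the paper's bare citation is an explicit mechanism (the associated graded ring plus a depth count) that also exhibits $R/I^k$ as Cohen--Macaulay, which is more than unmixedness. The one claim you should withdraw is the closing assertion that the unmixedness statement ``applies over any Noetherian ring'': your argument genuinely uses that $R$ is Cohen--Macaulay, and this hypothesis is not removable. In a non-Cohen--Macaulay local domain of depth $1$ and dimension at least $2$, a single nonzerodivisor $x$ is a regular sequence, yet $\depth(R/(x))=0$ forces the maximal ideal to be an embedded associated prime of $(x)$, so $(x)$ is already mixed in the height sense. What survives over an arbitrary Noetherian ring is the stability $\mathrm{Ass}(R/I^k)=\mathrm{Ass}(R/I)$ of associated primes of the powers (normal torsion-freeness), which is the actual content of the Zariski--Samuel lemma; the height statement needs the ambient ring to be Cohen--Macaulay, as it is throughout this paper.
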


One can also compute the symbolic powers of vanishing ideals of finite
sets of reduced projective points using
Lemma~\ref{anoth-one-char-spow-general} 
because these ideals  are 
intersections of finitely many
prime ideals that are complete intersections. It is well known that
complete intersections are normally torsion-free
(Lemma~\ref{ntr-complete-intersection}).

\begin{remark}{(Jonathan O'Rourke)}\label{Tai-symbolic-powers-algorithm} 
 If $I$ is a radical 
ideal of $R$ and all associated primes of $I$ are normally
torsion-free, 
then the $k$-th symbolic power of $I$ can be computed using the
following 
procedure for {\em Macaulay\/}$2$ \cite{mac2}.
\begin{small}
\begin{verbatim}
SP1 = (I,k) ->  (temp = primaryDecomposition I; 
temp2 = ((temp_0)^k); for i from 1 to #temp-1 do(temp2 =                               
     intersect(temp2,(temp_i)^k)); return temp2)                  
\end{verbatim}
\end{small}
\end{remark}

\begin{example}\label{aron-valladolid} Let $\mathbb{X}$ be the set
$\{[e_1],[e_2],[e_3],[e_4],[(1,1,1,1)]\}$ of $5$ points in general
linear position in $\mathbb{P}^3$, over the field $\mathbb{Q}$, 
where $e_i$ is the $i$-th unit vector, and let $I=I(\mathbb{X})$ be
its vanishing ideal. Using {\em Macaulay\/}$2$ \cite{mac2}
and Remark~\ref{Tai-symbolic-powers-algorithm} we obtain
$$
I=(x_2x_4-x_3x_4, x_1x_4-x_3x_4, x_2x_3-x_3x_4, x_1x_3-x_3x_4,
x_1x_2-x_3x_4),
$$ 
$I^2=I^{(2)}$, $I^3\neq I^{(3)}$ and $I$ is a Gorenstein ideal. This
example (in greater generality)
has been used in \cite[proof of Proposition~4.1 and
Remark~4.2(2)]{Aron-Valladolid}.
\end{example}

\begin{proposition}{\rm\cite{herzog-hibi-trung}}\label{symbolic-fg-monomial}
If $I\subset R$ is a monomial ideal, then  
the symbolic Rees algebra $R_s(I)$ of $I$ is a finitely generated $K$-algebra. 
\end{proposition}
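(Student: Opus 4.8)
The plan is to translate the statement into the combinatorics of a monomial exponent monoid and reduce finite generation of the $K$-algebra $R_s(I)$ to finite generation of that monoid. Since $I$ is a monomial ideal, each symbolic power $I^{(k)}$ is monomial, so $R_s(I)$ is the monomial $K$-subalgebra of $R[t]$ spanned by the monomials $x^at^k$ with $x^a\in I^{(k)}$. Writing $Q:=\{(a,k)\in\mathbb{N}^n\times\mathbb{N}\mid x^a\in I^{(k)}\}$, the inclusions $I^{(k)}I^{(l)}\subseteq I^{(k+l)}$ show that $Q$ is a submonoid of $\mathbb{N}^{n+1}$ and that $R_s(I)=K[Q]$. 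Hence it suffices to prove that $Q$ is a finitely generated monoid, for then $K[Q]$ is a finitely generated $K$-algebra.

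First I would make the monoid explicit. Let $I=\mathfrak{I}_1\cap\cdots\cap\mathfrak{I}_r\cap\cdots\cap\mathfrak{I}_m$ be an irredundant primary decomposition with $\mathfrak{I}_1,\ldots,\mathfrak{I}_r$ the components at the minimal primes. By Lemma~\ref{anoth-one-char-spow-general} one has $I^{(k)}=\mathfrak{I}_1^k\cap\cdots\cap\mathfrak{I}_r^k$ for all $k\geq 1$, and therefore, comparing graded pieces in $R[t]$,
\begin{equation*}
R_s(I)=\bigcap_{i=1}^r R[\mathfrak{I}_it],\qquad Q=\bigcap_{i=1}^r M_i,
\end{equation*}
where $M_i:=\{(a,k)\mid x^a\in\mathfrak{I}_i^k\}$ is the exponent monoid of the ordinary Rees algebra $R[\mathfrak{I}_it]$. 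Each $R[\mathfrak{I}_it]$ is the Rees algebra of an ideal in a Noetherian ring, hence a finitely generated $K$-algebra, so each $M_i$ is an affine (finitely generated) monoid. Thus the problem is reduced to showing that the intersection of the finitely many affine monoids $M_1,\ldots,M_r$ is again finitely generated.

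For the geometric part I would pass to the rational polyhedral cones $C_i:=\mathbb{R}_{\geq 0}M_i\subseteq\mathbb{R}^{n+1}$. Their intersection $\bigcap_iC_i$ is again a rational polyhedral cone, so by Gordan's lemma the monoid $\overline{Q}:=\left(\bigcap_iC_i\right)\cap\mathbb{Z}^{n+1}=\bigcap_i\left(C_i\cap\mathbb{Z}^{n+1}\right)$ is finitely generated; in the graded language $\overline{Q}$ is the exponent monoid of $\bigcap_i\overline{R[\mathfrak{I}_it]}$, the intersection of the normalizations. When the primary components $\mathfrak{I}_i$ are normal one has $M_i=C_i\cap\mathbb{Z}^{n+1}$ for every $i$, whence $Q=\overline{Q}$ is finitely generated; this is the mechanism behind Proposition~\ref{aug6-17}, which computes the generators by Hilbert bases.

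The hard part is the general case, where the $\mathfrak{I}_i$ need not be normal and $Q$ is strictly smaller than $\overline{Q}$: already for a single irreducible component $(x_1^{c_1},\ldots,x_s^{c_s})$ the membership $x^a\in\mathfrak{I}_i^k$ is governed by the nonlinear condition $\sum_j\lfloor a_j/c_j\rfloor\geq k$, so $Q$ is not the set of lattice points of a cone. The obstacle is that an intersection of affine monoids (equivalently, of the Noetherian filtrations $\{\mathfrak{I}_i^k\}_k$) need not be finitely generated, so Gordan's lemma only controls the normalization $\overline{Q}$ and one must still bound the denominators relating $Q$ to $\overline{Q}$. This is precisely the finiteness theorem of Herzog, Hibi and Trung \cite{herzog-hibi-trung}: encoding the primary decomposition as a weighted hypergraph, the monoid $Q$ becomes its vertex cover monoid, and one shows by an integer-programming argument that the associated vertex cover algebra is Noetherian. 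I would therefore complete the proof by invoking their result, which guarantees that $Q$ is finitely generated and hence that $R_s(I)=K[Q]$ is a finitely generated $K$-algebra.
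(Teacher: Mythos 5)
Your argument is correct and follows essentially the same route as the paper: reduce via Lemma~\ref{anoth-one-char-spow-general} to the intersection $\bigcap_{i=1}^r R[\mathfrak{I}_it]$ of the Rees algebras of the minimal primary components, and then invoke the finiteness theorem of Herzog, Hibi and Trung (the paper cites \cite[Corollary~1.3]{herzog-hibi-trung}) for the key step that this intersection is a finitely generated $K$-algebra. The extra monoid-theoretic framing and the discussion of the normal case are sound but do not change the logical skeleton.
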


\begin{proof} 
It follows at once from Lemma~\ref{anoth-one-char-spow-general} 
and \cite[Corollary~1.3]{herzog-hibi-trung}.\qed
\end{proof}

To compute the generators of the symbolic Rees algebra of a monomial
ideal one can use the
procedure given in the proof of
\cite[Theorem~1.1]{herzog-hibi-trung}. Another method will be 
presented in this section that works when the primary components 
are normal.

\begin{remark} The symbolic Rees 
algebra of a monomial ideal $I$ is finitely generated if one uses
the associated primes of $I$ to define symbolic powers. This follows
from \cite[Corollary~1.3]{herzog-hibi-trung} and 
the following formula \cite[Theorem~3.7]{cooper-etal}:
$$
I^{\langle k\rangle}=\bigcap_{\mathfrak{p}\in {\rm maxAss}(R/I)}
(IR_\mathfrak{p}\cap
R)^k\ \mbox{ for }\ k\geq 1. 
$$
\end{remark}

\begin{corollary} If $I$ is a monomial ideal, then $R_s(I)$ is
Noetherian and there is an integer $k\geq 1$ such that
$[I^{(k)}]^i=I^{(ik)}$ for $i\geq 1$.
\end{corollary}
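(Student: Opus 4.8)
The plan is to derive both assertions from the finite generation of the symbolic Rees algebra in Proposition~\ref{symbolic-fg-monomial}. Noetherianity is immediate: $R_s(I)$ is a finitely generated $K$-algebra, hence a Noetherian ring by Hilbert's basis theorem. The substance is therefore the existence of $k$, and the first thing I would do is reformulate $[I^{(k)}]^i = I^{(ik)}$ in terms of the grading of $A := R_s(I) = \bigoplus_{j\ge 0} I^{(j)}t^j$. Writing $A^{(k)} := \bigoplus_{i\ge 0} A_{ik} = \bigoplus_{i\ge 0} I^{(ik)}t^{ik}$ for the $k$-th Veronese subalgebra, the desired equality for all $i$ says exactly that $A^{(k)}$ is generated over $A_0 = R$ by its first Veronese component $A_k = I^{(k)}t^k$. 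So the goal becomes: choose $k$ making $A^{(k)}$ standard graded over $R$.

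Before that I would record the elementary fact $I^{(a)}I^{(b)}\subseteq I^{(a+b)}$, which follows at once from the description $I^{(j)} = S^{-1}I^{j}\cap R$ in Definition~\ref{symbolic-power-def} (with $S = R\setminus\bigcup_i\mathfrak{p}_i$), since $(S^{-1}I^{a}\cap R)(S^{-1}I^{b}\cap R)\subseteq S^{-1}I^{a+b}\cap R$. This shows $A$ is genuinely an $\mathbb{N}$-graded ring and gives for free the inclusion $[I^{(k)}]^i\subseteq I^{(ik)}$ for every $k$ and $i$; only the reverse inclusion requires work.

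To produce $k$, I would pick a finite set of homogeneous $R$-algebra generators of $A$, say $g_1t^{d_1},\ldots,g_st^{d_s}$ with $d_\ell\ge 1$ (these exist by Proposition~\ref{symbolic-fg-monomial}), and take $k$ to be a common multiple of $d_1,\ldots,d_s$, for instance $k = \operatorname{lcm}(d_1,\ldots,d_s)$. Because $A$ is graded and generated by the $g_\ell t^{d_\ell}$ over $R$, the component $I^{(ik)}t^{ik} = A_{ik}$ is the $R$-span of the monomials $g_1^{a_1}\cdots g_s^{a_s}\,t^{ik}$ with $\sum_\ell a_\ell d_\ell = ik$. I would then show each such monomial lies in $[I^{(k)}]^i$ by partitioning its multiset of factors (the generator $g_\ell$ taken with multiplicity $a_\ell$, carrying $t$-degree $d_\ell$) into $i$ blocks of total $t$-degree $k$; each block is a product of generators of $t$-degree $k$, hence an element of $A_k = I^{(k)}t^k$, so the monomial becomes a product of $i$ elements of $I^{(k)}$.

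The heart of the argument, and the step I expect to be the main obstacle, is precisely this regrouping: one must verify that whenever each part $d_\ell$ divides $k$ and the total $\sum_\ell a_\ell d_\ell$ is a multiple of $k$, the multiset of parts partitions into blocks each summing to $k$. This is where taking $k$ to be a common multiple of all the $d_\ell$ is essential, since the divisibility $d_\ell\mid k$ together with the congruence forced by $\sum_\ell a_\ell d_\ell\equiv 0\pmod k$ is exactly what rules out an unbalanced collection of factors that cannot be assembled into equal blocks. Equivalently, this is the standard fact that a Noetherian $\mathbb{N}$-graded ring admits a standard-graded Veronese subalgebra, applied to $A = R_s(I)$; once it is in place, the translation above yields $I^{(ik)} = [I^{(k)}]^i$ for all $i\ge 1$, as required.
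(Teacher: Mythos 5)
Your overall strategy is the ``direct argument from Proposition~\ref{symbolic-fg-monomial}'' that the paper alludes to (the paper itself only cites \cite[p.~80, Lemma~2.1]{GoNi} and does not write the argument out), and most of it is sound: Noetherianity from finite generation, the inclusion $I^{(a)}I^{(b)}\subseteq I^{(a+b)}$, and the reduction of $[I^{(k)}]^i\supseteq I^{(ik)}$ to regrouping a monomial $g_1^{a_1}\cdots g_s^{a_s}$ of $t$-degree $ik$ into $i$ factors of $t$-degree $k$ are all correct. The gap is exactly the step you flag as ``the main obstacle'' and then do not prove: with $k=\operatorname{lcm}(d_1,\ldots,d_s)$, it is \emph{not} clear that a multiset of parts $d_\ell$ (each dividing $k$) with total $ik$ partitions into $i$ blocks each summing to $k$. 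The congruence argument you sketch does not establish this: the condition $\sum_\ell a_\ell d_\ell\equiv 0\pmod k$ does not force each part-type to contribute a multiple of $k$, and greedy regrouping genuinely fails --- e.g.\ for $k=12$ and parts $\{6,4,4,4,3,3\}$ the partial sums in decreasing order are $6,10,14,\ldots$, overshooting $12$; a valid partition $\{6,3,3\},\{4,4,4\}$ exists here, but producing one in general is a nontrivial combinatorial problem, not a formal consequence of divisibility plus the congruence. Your closing appeal to ``the standard fact that a Noetherian graded ring admits a standard-graded Veronese subalgebra'' is circular at this point, since that fact is precisely what is being proved, and its standard proofs do not take $k=\operatorname{lcm}(d_\ell)$.

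The repair is cheap because the statement only asserts that \emph{some} $k$ works: take $k=s\cdot\operatorname{lcm}(d_1,\ldots,d_s)$ and set $D=\operatorname{lcm}(d_1,\ldots,d_s)$. If a monomial $g^a$ has $\sum_\ell a_\ell d_\ell\geq sD$, then by pigeonhole some $\ell$ has $a_\ell d_\ell\geq D$, so one may split off the ``pure'' factor $g_\ell^{D/d_\ell}$ of $t$-degree exactly $D$. Iterating, from a monomial of degree $ik=isD$ one splits off $(i-1)s$ pure factors of degree $D$ (the remaining degree stays $\geq sD$ throughout), leaving a remainder of degree exactly $sD=k$; grouping the pure factors $s$ at a time yields $i$ factors of degree $k$, each in $I^{(k)}t^k$ by the inclusion $I^{(a)}I^{(b)}\subseteq I^{(a+b)}$ you already established. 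With this substitution your proof is complete and is a legitimate expansion of the paper's one-line citation.
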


\begin{proof} It follows at once from \cite[p.~80, Lemma~2.1]{GoNi} or
by a direct argument using 
Proposition~\ref{symbolic-fg-monomial}. \qed
\end{proof}

For convenience of notation in what follows we will often assume that
monomial  ideals have no embedded primes but some of the results can
be stated and proved for general monomial ideals.   

\begin{proposition}\label{aug5-17} Let $I\subset R$ be a monomial ideal without embedded
primes and let 
$I=\cap_{i=1}^r\mathfrak{I}_i$ be its minimal
irredundant primary decomposition. Then $R_s(I)$ is normal if and 
only if $R[\mathfrak{I}_it]$ is normal for all $i$.
\end{proposition}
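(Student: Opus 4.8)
The plan is to exploit the factorization of the symbolic Rees algebra as an intersection of ordinary Rees algebras, combined with the characterization of normality of a $\mathbb{Z}^{n+1}$-graded subalgebra of a polynomial ring as the integral closedness of its semigroup of lattice points. First I would note that by Lemma~\ref{anoth-one-char-spow-general} we have $I^{(k)}=\mathfrak{I}_1^{k}\cap\cdots\cap\mathfrak{I}_r^{k}$ for all $k\geq 1$ (using that $I$ has no embedded primes, so the $\mathfrak{I}_i$ are exactly the primary components and coincide with those attached to the minimal primes). Passing to the $t$-graded pieces, this yields the algebra-level identity
$$
R_s(I)=\bigcap_{i=1}^{r}R[\mathfrak{I}_it],
$$
where each intersection is taken inside $R[t]$. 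Since all the ideals involved are monomial, everything here is a monomial (i.e.\ $\mathbb{Z}^{n+1}$-graded) subalgebra of $K[x_1,\ldots,x_n,t]$, and each such algebra is determined by its underlying affine semigroup of exponent vectors.

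The key translation is that for a monomial subalgebra $S\subset K[x_1,\ldots,x_n,t]$ with exponent semigroup $\Gamma_S\subset\mathbb{Z}^{n+1}$, normality of $S$ is equivalent to $\Gamma_S$ being equal to its integral closure $\overline{\Gamma_S}=\mathbb{Z}\Gamma_S\cap\mathbb{R}_{\geq 0}\Gamma_S$ in its group modulo the rational cone it generates. So I would set $\Gamma=\Gamma_{R_s(I)}$ and $\Gamma_i=\Gamma_{R[\mathfrak{I}_it]}$, and the displayed intersection of algebras becomes
$$
\Gamma=\bigcap_{i=1}^{r}\Gamma_i
$$
as subsemigroups of $\mathbb{Z}^{n+1}$. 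The ``$\Leftarrow$'' direction is then essentially formal: the integral closure of a finite intersection of integrally closed semigroups is again integrally closed, because the defining group-and-cone operations commute with intersection; concretely, if each $\Gamma_i=\overline{\Gamma_i}$, then for a point $w$ integral over $\Gamma$ one shows $w$ is integral over each $\Gamma_i$, hence lies in each $\Gamma_i$, hence in $\Gamma$. One must check that the relevant groups and cones match up, which is where the concrete structure of these algebras (all graded over $t$ with the same ambient lattice) is used.

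The genuinely harder direction is ``$\Rightarrow$'': assuming $R_s(I)$ normal, recover normality of each individual $R[\mathfrak{I}_it]$. The main obstacle is that a single factor $\Gamma_i$ need not be a ``retract'' of the intersection in any automatic way, so I cannot simply read off $\Gamma_i=\overline{\Gamma_i}$ from $\Gamma=\overline{\Gamma}$. The plan here is to use the specific structure of primary monomial ideals from Eq.~(\ref{primary-monomial-ideal}): after reindexing, $\mathfrak{I}_i$ involves only a subset of the variables together with pure powers $x_j^{a_j}$, so $R[\mathfrak{I}_it]$ is, up to adjoining the remaining variables as free polynomial variables, determined by a lower-dimensional semigroup. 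Because the minimal primes $\mathfrak{p}_1,\ldots,\mathfrak{p}_r$ are pairwise distinct (the decomposition is minimal and there are no embedded primes), for each $i$ one can localize or specialize so as to isolate the $i$-th factor: intersecting $R_s(I)$ with the subring generated by the variables in the support of $\mathfrak{p}_i$ (and $t$) should collapse the other factors $\Gamma_j$ to something containing $\Gamma_i$, letting a normal $\Gamma$ force $\Gamma_i=\overline{\Gamma_i}$. I would make this precise by choosing, for each $j\neq i$, a variable $x_{\ell}\in\mathfrak{p}_j\setminus\mathfrak{p}_i$ and inverting it, so that $\mathfrak{I}_j$ becomes the unit ideal after localization and drops out of the intersection; a localization of a normal domain is normal, so the localized $R_s(I)$ is normal and equals (a localization of) $R[\mathfrak{I}_it]$, whence $R[\mathfrak{I}_it]$ is normal. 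The care needed is to verify that localization interacts correctly with the Rees-algebra structure and the $t$-grading, and that the surviving factor is exactly $R[\mathfrak{I}_it]$ up to a normality-preserving localization; this bookkeeping with the supports is the crux of the argument.
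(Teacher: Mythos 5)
Your overall strategy is close to the paper's: the identity $R_s(I)=\bigcap_{i=1}^rR[\mathfrak{I}_it]$ coming from Lemma~\ref{anoth-one-char-spow-general} settles the ``if'' direction (an intersection of normal domains with a common field of fractions is normal; your semigroup reformulation is a correct, if heavier, way of saying the same thing), and for the ``only if'' direction one isolates $\mathfrak{I}_i$ by inverting elements outside $\mathfrak{p}_i$. The gap is in your final inference: from ``the localized $R_s(I)$ is normal and equals a localization of $R[\mathfrak{I}_it]$'' you conclude ``whence $R[\mathfrak{I}_it]$ is normal.'' Normality does not descend from a localization: $K[x^2,x^3]$ localized at $x^2$ is $K[x,x^{-1}]$, which is normal, while $K[x^2,x^3]$ is not. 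So knowing that the localization of $R[\mathfrak{I}_it]$ at (a multiplicative set disjoint from) $\mathfrak{p}_i$ is normal is not yet enough.

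The missing step --- and the place where the hypothesis that the $\mathfrak{I}_i$ are \emph{primary monomial} ideals is actually used --- is the descent identity
$$
R_{\mathfrak{p}_i}[(\mathfrak{I}_i)_{\mathfrak{p}_i}t]\cap R[t]=R[\mathfrak{I}_it],
$$
which holds degreewise because every power $\mathfrak{I}_i^k$ is again $\mathfrak{p}_i$-primary (see Eq.~(\ref{primary-monomial-ideal})), hence is contracted from the localization. With this in hand, $R[\mathfrak{I}_it]$ is an intersection of two normal domains ($R[t]$ and the normal localized Rees algebra) inside a common fraction field, and its normality follows. The paper obtains normality of the localized algebra via the Mori--Nagata theorem and a lemma of Simis--Trung; your more elementary route of inverting one variable $x_\ell\in\mathfrak{p}_j\setminus\mathfrak{p}_i$ for each $j\neq i$ can be made to work, but without the contraction step above the argument is incomplete.
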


\begin{proof} $\Rightarrow$): Since $R_s(I)$ is Noetherian and normal it is a Krull
domain by a theorem of Mori and Nagata \cite[p.~296]{Mat}. 
Therefore, by \cite[Lemma~2.5]{simis-trung}, we get that
$R_{\mathfrak{p}_i}[I_{\mathfrak{p}_i}t]=
R_{\mathfrak{p}_i}[(\mathfrak{I}_i)_{\mathfrak{p}_i}t]$
is normal.  Let $\mathfrak{p}_i$ be the radical of $\mathfrak{I}_i$.
Any power of $\mathfrak{I}_i$ is  
a $\mathfrak{p}_i$-primary ideal. This follows from
\cite[Proposition~6.1.7]{monalg-rev} (see
Eq.~(\ref{primary-monomial-ideal}) at the beginning of this section).  
Hence it is seen that 
$R_{\mathfrak{p}_i}[(\mathfrak{I}_i)_{\mathfrak{p}_i}t]\cap
R[t]=R[\mathfrak{I}_it]$. 
As $R[t]$ is normal it follows 
that $R[\mathfrak{I}_it]$ is normal.

$\Leftarrow$): By Lemma~\ref{anoth-one-char-spow-general} one has
$\cap_{i=1}^rR[\mathfrak{I}_it]=R_s(I)$. As $R[\mathfrak{I}_it]$ and
$R_s(I)$ have the same field of quotients it follows that $R_s(I)$ is
normal. \qed
\end{proof}

In general, even for monomial ideals without embedded primes, normally
torsion-free ideals may not be normal. For instance $I=(x_1^2,x_2^2)$
is normally torsion-free and is not normal. 
As a consequence of Proposition~\ref{aug5-17} one recovers the
following well known result.

\begin{corollary} Let $I$ be a squarefree monomial ideal. Then
$R_s(I)$ is normal and $R[It]$ is normal if $I$ is normally 
torsion-free. 
\end{corollary}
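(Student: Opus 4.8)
The plan is to derive both assertions from Proposition~\ref{aug5-17} together with the concrete shape of the primary components of a squarefree monomial ideal. Since $I$ is squarefree, it is radical, so it has no embedded primes and its minimal irredundant primary decomposition $I=\cap_{i=1}^r\mathfrak{I}_i$ coincides with its minimal prime decomposition; each component $\mathfrak{I}_i=\mathfrak{p}_i$ is a monomial prime generated by a subset of the variables, say $\mathfrak{p}_i=(x_j : j\in S_i)$ for some $S_i\subset\{1,\ldots,n\}$. By Proposition~\ref{aug5-17}, $R_s(I)$ is normal if and only if $R[\mathfrak{p}_i t]$ is normal for every $i$, so the entire first statement reduces to showing that the Rees algebra of a prime generated by variables is normal.

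For this key step I would work with the affine semigroup underlying $R[\mathfrak{p}t]$, where $\mathfrak{p}=(x_j : j\in S)$. A monomial $x^a t^b$ lies in $R[\mathfrak{p}t]$ exactly when $x^a\in\mathfrak{p}^b$, that is, when $(a,b)\in\mathbb{Z}_{\geq 0}^{n+1}$ satisfies $\sum_{j\in S}a_j\geq b$. These are precisely the lattice points of the rational polyhedral cone cut out by $a\geq 0$, $b\geq 0$ and $\sum_{j\in S}a_j-b\geq 0$, so the semigroup is saturated and $R[\mathfrak{p}t]$ is a normal affine semigroup ring. Equivalently, one checks directly that $\mathfrak{p}^k$ is integrally closed for every $k$: its Newton polyhedron is $\{a\in\mathbb{R}_{\geq 0}^n : \sum_{j\in S}a_j\geq k\}$, and each integer point of this polyhedron is the exponent of a monomial already in $\mathfrak{p}^k$. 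Either way each $R[\mathfrak{p}_i t]$ is normal, and Proposition~\ref{aug5-17} then yields that $R_s(I)$ is normal.

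For the second assertion, assume in addition that $I$ is normally torsion-free. Because $I$ has no embedded primes, Remark~\ref{oct10-17} gives $I^k=I^{(k)}$ for all $k\geq 1$, whence $R[It]=R_s(I)$ as graded subalgebras of $R[t]$; by the first part this algebra is normal, so $R[It]$ is normal. The only nonformal ingredient is the normality of the Rees algebra of a monomial prime, and I expect that to be the main point to justify carefully; everything else is a direct application of Proposition~\ref{aug5-17}, Remark~\ref{oct10-17}, and the description of the primary components of a squarefree ideal.
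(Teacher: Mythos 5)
Your argument is correct and follows exactly the route the paper intends: the paper gives no explicit proof but states the corollary as a consequence of Proposition~\ref{aug5-17}, and your proof fills in the standard details (the primary components of a squarefree monomial ideal are primes generated by variables, whose Rees algebras are normal since each power is integrally closed, plus Remark~\ref{oct10-17} for the normally torsion-free case). No gaps.
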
 

Let $I$ be a monomial ideal and let $G(I)=\{x^{v_1},\ldots,x^{v_m}\}$
be its minimal set of generators. We set
$$
\mathcal{A}_I=\{e_1,\ldots,e_n,(v_1,1),\ldots,(v_m,1)\},
$$
where $e_1,\ldots,e_n$ belong to $\mathbb{Z}^{n+1}$, and denote by
$\mathbb{R}_+(I)$ or $\mathbb{R}_+\mathcal{A}_I$ (resp. 
$\mathbb{N}\mathcal{A}_I$) the cone (resp. semigroup) generated by
$\mathcal{A}_I$. The integral closure of $R[It]$ is  
given by $\overline{R[It]}=K[\mathbb{R}_+(I)\cap\mathbb{Z}^{n+1}]$. Recall that
a finite set $\mathcal{H}$ is called a {\it Hilbert basis\/} if 
$\mathbb{N}\mathcal{H}=
\mathbb{R}_+\mathcal{H}\cap\mathbb{Z}^{n+1}$, and that $R[It]$ is
normal if and only if $\mathcal{A}_I$ is a Hilbert basis
\cite[Proposition~14.2.3]{monalg-rev}. 

Let $C\subset\mathbb{R}^{n+1}$ be a rational polyhedral cone. A finite 
set $\mathcal{H}$ is called a
Hilbert basis of $C$ if $C =\mathbb{R}_+\mathcal{H}$ and
$\mathcal{H}$ is a Hilbert basis. A Hilbert basis of
$C$ is minimal if it does not strictly contain any other Hilbert
basis of $C$. For pointed cones there is unique minimal Hilbert 
basis \cite[Theorem~1.3.9]{monalg-rev}. 

If the primary components of a
monomial ideal are normal, the next result gives a simple procedure to compute
its symbolic Rees algebra using Hilbert bases.

\begin{proposition}\label{aug6-17} Let $I$ be a monomial ideal without embedded
primes and let $I=\cap_{i=1}^r\mathfrak{I}_i$ be its minimal
irredundant primary decomposition. If $R[\mathfrak{I}_it]$ is normal
for all $i$ and $\mathcal{H}$ is the Hilbert basis of
the polyhedral cone $\cap_{i=1}^r\mathbb{R}_+(\mathfrak{I}_i)$, then 
$R_s(I)$ is $K[\mathbb{N}\mathcal{H}]$, the semigroup ring of
$\mathbb{N}\mathcal{H}$.
\end{proposition}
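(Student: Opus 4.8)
The plan is to pass from ideal membership to lattice-point membership in cones and then read off everything from the Hilbert basis. Since every $I^{(k)}$ is a monomial ideal, $R_s(I)=\bigoplus_{k\geq 0}I^{(k)}t^k$ is a monomial subalgebra of $R[t]$, hence is the $K$-span of all monomials $x^at^k$ with $x^a\in I^{(k)}$. Writing $S=\{(a,k)\in\mathbb{Z}^{n+1}\mid x^a\in I^{(k)}\}$ for the set of exponent vectors, the entire problem reduces to proving $S=C\cap\mathbb{Z}^{n+1}$, where $C=\bigcap_{i=1}^r\mathbb{R}_+(\mathfrak{I}_i)$, after which the definition of $\mathcal{H}$ finishes the argument.

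First I would use Lemma~\ref{anoth-one-char-spow-general}, which gives $I^{(k)}=\mathfrak{I}_1^k\cap\cdots\cap\mathfrak{I}_r^k$ for $k\geq 1$; thus $x^a\in I^{(k)}$ iff $x^a\in\mathfrak{I}_i^k$ for every $i$. Next I would convert each membership $x^a\in\mathfrak{I}_i^k$ into a cone condition using the normality hypothesis: since $R[\mathfrak{I}_it]$ is normal, it equals its integral closure $\overline{R[\mathfrak{I}_it]}=K[\mathbb{R}_+(\mathfrak{I}_i)\cap\mathbb{Z}^{n+1}]$, so comparing exponents yields $x^a\in\mathfrak{I}_i^k$ iff $(a,k)\in\mathbb{R}_+(\mathfrak{I}_i)$. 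Intersecting over $i$ gives $x^a\in I^{(k)}$ iff $(a,k)\in C$, that is $S=C\cap\mathbb{Z}^{n+1}$. The degree $k=0$ contributes $R$ itself, and indeed $(a,0)\in C$ for all $a\in\mathbb{N}^n$ since each $\mathbb{R}_+(\mathfrak{I}_i)$ contains $e_1,\ldots,e_n$, so this degree is accounted for as well.

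Finally, $C$ is a rational polyhedral cone as a finite intersection of the rational polyhedral cones $\mathbb{R}_+(\mathfrak{I}_i)$, and it is pointed because $C\subseteq\mathbb{R}_+^{n+1}$; hence it admits the Hilbert basis $\mathcal{H}$ of the statement. By the definition of a Hilbert basis of a cone one has $\mathbb{N}\mathcal{H}=\mathbb{R}_+\mathcal{H}\cap\mathbb{Z}^{n+1}=C\cap\mathbb{Z}^{n+1}=S$, and therefore $R_s(I)$ is the $K$-span of $\{x^at^k\mid (a,k)\in\mathbb{N}\mathcal{H}\}$, i.e. $R_s(I)=K[\mathbb{N}\mathcal{H}]$. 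I do not anticipate a serious obstacle: the argument is a chain of equivalences once the lattice-point viewpoint is adopted. The step that genuinely requires the hypothesis, and hence deserves the most care, is the equivalence $x^a\in\mathfrak{I}_i^k\Leftrightarrow(a,k)\in\mathbb{R}_+(\mathfrak{I}_i)$: without normality the cone $\mathbb{R}_+(\mathfrak{I}_i)$ describes only the integral closure $\overline{R[\mathfrak{I}_it]}$ and not $R[\mathfrak{I}_it]$ itself, so the conclusion could break down. It is also essential that symbolic powers intersect only the primary components of the minimal primes, which is exactly the content of Lemma~\ref{anoth-one-char-spow-general}.
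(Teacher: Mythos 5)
Your proposal is correct and follows essentially the same route as the paper: both arguments combine Lemma~\ref{anoth-one-char-spow-general} (giving $I^{(k)}=\cap_i\mathfrak{I}_i^k$, i.e.\ $R_s(I)=\cap_iR[\mathfrak{I}_it]$) with the identification, via normality, of each $R[\mathfrak{I}_it]$ with $K[\mathbb{R}_+(\mathfrak{I}_i)\cap\mathbb{Z}^{n+1}]$, and then intersect. Your pointwise lattice-point phrasing is just an unwinding of the paper's one-line chain of equalities of semigroup rings, so there is nothing further to add.
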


\demo As
$R[\mathfrak{I}_it]=K[\mathbb{N}\mathcal{A}_{\mathfrak{I}_i}]$ 
is normal for $i=1,\ldots,r$, the semigroup
$\mathbb{N}\mathcal{A}_{\mathfrak{I}_i}$ is equal to 
$\mathbb{R}_+(\mathfrak{I}_i)\cap\mathbb{Z}^{n+1}$ for
$i=1,\ldots,r$. Hence, by Lemma~\ref{anoth-one-char-spow-general}, we get 
\begin{eqnarray*}
R_s(I)&=&\cap_{i=1}^rR[\mathfrak{I}_it]=\cap_{i=1}^rK[\mathbb{N}\mathcal{A}_{\mathfrak{I}_i}]=
K[\cap_{i=1}^r\mathbb{N}\mathcal{A}_{\mathfrak{I}_i}]\\
&=&K[\mathbb{R}_+(\mathfrak{I}_1)\cap\cdots
\cap\mathbb{R}_+(\mathfrak{I}_r)\cap\mathbb{Z}^{n+1}]=K[\mathbb{N}\mathcal{H}].
\ \qed
\end{eqnarray*}

\begin{definition} The rational polyhedral cone
$\cap_{i=1}^r\mathbb{R}_+(\mathfrak{I}_i)$ is called the {\it Simis
cone} of $I$  and is denoted by ${\rm Cn}(I)$.
\end{definition}

For squarefree monomial ideals the Simis cone was introduced in
\cite{normali}. In particular from Proposition~\ref{aug6-17} we
recover \cite[Theorem~3.5]{normali}.

\begin{example}\label{symbolic-normaliz} The ideal $I=(x_2x_3,\, x_4x_5,\,
x_3x_4,\, x_2x_5,\, x_1^2x_3,\, x_1x_2^2)$ satisfies the hypothesis
of Proposition~\ref{aug6-17}. Using {\it Normaliz\/} \cite{Normaliz} we obtain that 
the minimal Hilbert basis of the Simis cone is:
\begin{verbatim}
18 Hilbert basis elements:
 0 0 0 0 1 0     1 2 0 0 0 1 
 0 0 0 1 0 0     2 0 1 0 0 1
 0 0 1 0 0 0     1 2 0 0 1 2
 0 1 0 0 0 0     1 2 1 0 0 2
 1 0 0 0 0 0     2 2 1 0 1 3
 0 0 0 1 1 1     2 2 2 0 0 3
 0 0 1 1 0 1     2 4 1 0 2 5
 0 1 0 0 1 1     2 4 2 0 1 5
 0 1 1 0 0 1     2 4 3 0 0 5
\end{verbatim}

Hence $R_s(I)$ is generated by the monomials corresponding to these 
vectors.
\end{example}

Let $I$ be an ideal of $R$. The equality 
``$I^k=I^{(k)}$ for $k \geq 1$'' holds if and only if $I$ has no 
embedded primes and is normally
torsion-free (see Remark~\ref{oct10-17}). We refer the reader to 
\cite{symbolic-powers-survey} for a recent survey on symbolic powers of ideals.   

In \cite[Corollary~3.14]{clutters} it is shown that a squarefree
monomial ideal $I$ is normally torsion-free if and only if the 
corresponding hypergraph satisfies the max-flow min-cut property. 
As an application we present a classification of the equality
between ordinary and 
symbolic powers for a family of monomial ideals. 

\begin{corollary}\label{oct16-17} Let $I$ be a monomial ideal without embedded
primes and let $\mathfrak{I}_1,\ldots,\mathfrak{I}_r$  be its primary
components. If $R[\mathfrak{I}_it]$ is normal
for all $i$, then $I^k=I^{(k)}$ for $k\geq 1$ if and 
only if\, ${\rm Cn}(I)=\mathbb{R}_+(I)$ and $R[It]$ is normal. 
\end{corollary}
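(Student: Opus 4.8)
The plan is to reformulate the pointwise condition ``$I^k=I^{(k)}$ for $k\geq 1$'' as a single equality of graded algebras, namely $R[It]=R_s(I)$. Indeed $R[It]=\bigoplus_{k\geq 0}I^kt^k$ and $R_s(I)=\bigoplus_{k\geq 0}I^{(k)}t^k$ are both graded subalgebras of $R[t]$, and since $I^k\subseteq I^{(k)}$ always holds, they coincide exactly when $I^k=I^{(k)}$ in every degree. I would then bring in the two semigroup-ring descriptions available under the running hypothesis: the integral closure $\overline{R[It]}=K[\mathbb{R}_+(I)\cap\mathbb{Z}^{n+1}]$, and, from Proposition~\ref{aug6-17}, the identity $R_s(I)=K[\mathbb{N}\mathcal{H}]=K[{\rm Cn}(I)\cap\mathbb{Z}^{n+1}]$, the last step because the Hilbert basis $\mathcal{H}$ of the Simis cone generates precisely the semigroup ${\rm Cn}(I)\cap\mathbb{Z}^{n+1}$. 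Recall also that $R_s(I)$ is normal by Proposition~\ref{aug5-17}, as $R[\mathfrak{I}_it]$ is normal for all $i$.

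A preliminary step I would isolate is the containment $\mathbb{R}_+(I)\subseteq{\rm Cn}(I)$. Since $I\subseteq\mathfrak{I}_i$ we have $R[It]\subseteq R[\mathfrak{I}_it]$, equivalently $\mathbb{N}\mathcal{A}_I\subseteq\mathbb{N}\mathcal{A}_{\mathfrak{I}_i}$, and hence $\mathbb{R}_+(I)\subseteq\mathbb{R}_+(\mathfrak{I}_i)$ for every $i$; intersecting over $i$ yields $\mathbb{R}_+(I)\subseteq{\rm Cn}(I)$. Passing to lattice points and semigroup rings, this produces the chain $R[It]\subseteq\overline{R[It]}\subseteq R_s(I)$, which is consistent with the normality of $R_s(I)$.

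For the forward direction, assuming $R[It]=R_s(I)$: since $R_s(I)$ is normal, so is $R[It]$, giving $R[It]=\overline{R[It]}$; combined with $\overline{R[It]}\subseteq R_s(I)=R[It]$ this forces $\overline{R[It]}=R_s(I)$, that is $K[\mathbb{R}_+(I)\cap\mathbb{Z}^{n+1}]=K[{\rm Cn}(I)\cap\mathbb{Z}^{n+1}]$, whence $\mathbb{R}_+(I)\cap\mathbb{Z}^{n+1}={\rm Cn}(I)\cap\mathbb{Z}^{n+1}$ and $R[It]$ is normal. For the converse, assuming ${\rm Cn}(I)=\mathbb{R}_+(I)$ and $R[It]$ normal, I would read the chain $R_s(I)=K[{\rm Cn}(I)\cap\mathbb{Z}^{n+1}]=K[\mathbb{R}_+(I)\cap\mathbb{Z}^{n+1}]=\overline{R[It]}=R[It]$ from left to right and conclude $R[It]=R_s(I)$.

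The one point requiring care, and the step I expect to be the main obstacle, is upgrading the equality of lattice points to an equality of cones in the forward direction: from $\mathbb{R}_+(I)\cap\mathbb{Z}^{n+1}={\rm Cn}(I)\cap\mathbb{Z}^{n+1}$ I must deduce ${\rm Cn}(I)=\mathbb{R}_+(I)$, not merely agreement on integer points. This is exactly where rationality is used: both cones are rational polyhedral, so each is the $\mathbb{R}_+$-span of its own integer points (clear denominators in a set of rational generators to land in the cone's lattice points). Hence ${\rm Cn}(I)=\mathbb{R}_+({\rm Cn}(I)\cap\mathbb{Z}^{n+1})=\mathbb{R}_+(\mathbb{R}_+(I)\cap\mathbb{Z}^{n+1})=\mathbb{R}_+(I)$, yielding the cone equality and completing the argument.
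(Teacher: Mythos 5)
Your proof is correct and follows essentially the same route as the paper's: translate ``$I^k=I^{(k)}$ for all $k$'' into $R[It]=R_s(I)$, use $\overline{R[It]}=K[\mathbb{R}_+(I)\cap\mathbb{Z}^{n+1}]$ together with $R_s(I)=K[{\rm Cn}(I)\cap\mathbb{Z}^{n+1}]$ from Proposition~\ref{aug6-17}, and compare the two semigroup rings in each direction. The only difference is that you spell out the passage from equality of lattice points to equality of the rational polyhedral cones (each being the $\mathbb{R}_+$-span of its integer points), a step the paper's proof leaves implicit.
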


\begin{proof} $\Rightarrow$): As $R_s(I)=R[It]$, by
Proposition~\ref{aug6-17}, $R[It]$ is normal. Therefore one has
$$
K[{\rm
Cn}(I)\cap\mathbb{Z}^{n+1}]=R_s(I)=R[It]=
\overline{R[It]}=K[\mathbb{R}_+(I)\cap\mathbb{Z}^{n+1}].
$$
\quad Thus ${\rm Cn}(I)=\mathbb{R}_+(I)$.

$\Leftarrow$): By the proof of Proposition~\ref{aug6-17} one has
$R_s(I)=K[{\rm Cn}(I)\cap\mathbb{Z}^{n+1}]$. Hence
$$
R_s(I)=K[{\rm Cn}(I)\cap\mathbb{Z}^{n+1}]=K[\mathbb{R}_+(I)\cap
\mathbb{Z}^{n+1}]=\overline{R[It]}.
$$
\quad As $R[It]$ is normal, we get $R_s(I)=R[It]$, that is,
$I^k=I^{(k)}$ for $k\geq 1$. 
\qed
\end{proof}

\section{Cohen--Macaulay weighted oriented trees}\label{cm-digraphs}

In this section we show that edge ideals of transitive 
weighted oriented graphs satisfy Alexander duality. It turns out
that edge ideals of weighted acyclic tournaments are Cohen--Macaulay
and satisfy Alexander duality. Then we classify all Cohen--Macaulay weighted oriented
forests. Here we continue to employ
the notation and definitions used in Sections~\ref{intro-section} 
and \ref{irreducible-deco}.

Let $G$ be a graph with vertex set $V(G)$. A subset $C\subset V(G)$  
is a {\it minimal vertex cover\/}\index{vertex!cover!minimal} of 
$G$ if:\index{minimal!vertex cover!of a graph} 
(i) every edge of $G$ is incident with at least one vertex in $C$, 
and (ii) there is no proper subset of $C$ with the first 
property. If $C$ satisfies condition (i) only, then $C$ is 
called a {\it vertex cover\/}\index{vertex!cover of a graph} of $G$.

Let $\mathcal{D}$ be a weighted oriented graph 
with underlying graph
$G$. Next we recall a combinatorial description of the irreducible
decomposition of $I(\mathcal{D})$. 

\begin{definition}{\cite{PRT}} Let $C$ be a vertex cover of $G$. 
Consider the set $L_1(C)$ of all $x\in C$ such that there is 
$(x,y)\in E(\mathcal{D})$ with $y\notin C$, the set $L_3(C)$ of all
$x\in C$ such that $N_G(x)\subset C$, and the set $L_2(C)=C\setminus(
L_1(C)\cup L_3(C))$, where $N_G(x)$ is the {\it neighbor} set of $x$ 
consisting of all $y\in V(G)$ such that $\{x,y\}$ is an edge of $G$. 
A vertex cover $C$ of $G$ is called a {\it strong
vertex cover} of $\mathcal{D}$ if $C$ is a minimal vertex cover of
$G$ or else for
all $x\in L_3(C)$ there is $(y,x)\in E(\mathcal{D})$ such that $y\in
L_2(C)\cup L_3(C)$ with $d(y)\geq 2$. 
\end{definition}

\begin{theorem}{\rm\cite{PRT}}\label{Pitones-Reyes} Let $\mathcal{D}$ be a weighted oriented graph. 
Then $L$ is a minimal irreducible monomial ideal of $I(\mathcal{D})$ if and 
only if there is a strong vertex cover of $\mathcal{D}$ such that
$$  
L=(L_1(C)\cup\{x_i^{d_i}\vert\, x_i\in L_2(C)\cup
L_3(C)\}).
$$
\end{theorem}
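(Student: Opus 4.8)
The plan is to work directly with the combinatorics of irreducible monomial ideals. Recall that, up to relabeling, every irreducible monomial ideal has the form $L=(x_i^{a_i}\mid i\in C)$ for a set of variables $C$ and exponents $a_i\ge 1$, and that a monomial lies in $L$ exactly when it is divisible by one of these pure powers. The first step is to translate the containment $I(\mathcal D)\subseteq L$ into a covering condition: for each edge $(x_i,x_j)$ the generator $x_ix_j^{d_j}$ must be divisible by some $x_k^{a_k}$, which forces either ($i\in C$ and $a_i=1$) or ($j\in C$ and $a_j\le d_j$). In particular $i\in C$ or $j\in C$, so $C$ is a vertex cover of $G$. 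A second, purely order-theoretic observation will organize the whole argument: since $(x_i^{a_i}\mid i\in C')\subseteq(x_i^{a_i}\mid i\in C)$ holds iff $C'\subseteq C$ and the exponents only increase, a presentation $L$ is a minimal irreducible ideal of $I(\mathcal D)$ iff no single generator can be deleted and no single exponent can be raised while keeping $I(\mathcal D)\subseteq L$.

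For the direction ``minimal $\Rightarrow$ strong cover,'' let $L=(x_i^{a_i}\mid i\in C)$ be minimal and take $C$ to be its support. First I would show $a_i\le d_i$ for every $i$: if $a_i>d_i$ then $x_i^{a_i}$ divides no generator of $I(\mathcal D)$, since $x_i$ occurs with exponent $1$ as a source and $d_i$ as a target, so that generator is redundant, contradicting minimality. Next, if $x_i\in L_1(C)$ there is an arrow $(x_i,x_j)$ with $x_j\notin C$; the monomial $x_ix_j^{d_j}$ can only be covered through $x_i$, forcing $a_i=1$. Finally, for $x_i\in L_2(C)\cup L_3(C)$ every out-neighbor lies in $C$, so the outgoing edges at $x_i$ are already covered by their targets (recall $a_j\le d_j$ was just established); hence if $a_i<d_i$ one could raise $a_i$ by one without losing any incoming edge, which survives because $x_i$ occurs there with exponent $d_i\ge a_i+1$, contradicting minimality. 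Thus $a_i=d_i$ on $L_2(C)\cup L_3(C)$, and $L$ has exactly the stated shape.

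It remains to see that $C$ is \emph{strong}, and this is where the definition is forced on us. Fix $x\in L_3(C)$. By minimality the generator $x^{d(x)}$ cannot be dropped, so some edge is covered by it alone; since all neighbors of $x$ lie in $C$, the outgoing edges at $x$ are covered by their targets, so the witnessing edge must be an incoming arrow $(y,x)$ with $y\in C$. That this edge is \emph{not} covered through $y$ means $y^{a_y}\nmid y\,x^{d(x)}$, and as $y$ occurs there with exponent $1$ this forces $a_y\ge 2$; hence $y\in L_2(C)\cup L_3(C)$ with $d(y)=a_y\ge 2$, which is precisely the strong-cover condition. For the converse I would start from a strong vertex cover $C$, set $L=(L_1(C)\cup\{x_i^{d_i}\mid x_i\in L_2(C)\cup L_3(C)\})$, and note it is irreducible with $I(\mathcal D)\subseteq L$, each edge being covered by its target when that lies in $C$ and otherwise by its source, which then lies in $L_1(C)$. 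Minimality is checked through the same single-step criterion: a source-to-outside arrow prevents deleting or enlarging $x_i$ for $x_i\in L_1(C)$; an incoming arrow from outside $C$ does the same for $x_i\in L_2(C)$; and for $x_i\in L_3(C)$ the arrow $(y,x_i)$ supplied by the strong-cover condition, with $d(y)\ge 2$, is covered neither by $y^{d(y)}$ nor by a raised power of $x_i$, so $x_i^{d_i}$ can be neither removed nor enlarged.

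I expect the main obstacle to be the analysis at the vertices of $L_3(C)$: for these ``interior'' vertices the exponent $d_i$ is not forced by $I(\mathcal D)\subseteq L$ alone, and one must see that it is exactly the strong-cover condition that simultaneously pins the exponent at $d_i$ and keeps the generator irredundant. The remaining bookkeeping---tracking, for each edge, whether a given variable appears as a source with exponent $1$ or as a target with exponent $d_i$---is routine but must be done with care, as must the reduction of honest poset-minimality to the single-step criterion used throughout.
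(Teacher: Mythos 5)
This statement is quoted in the paper from Pitones--Reyes--Toledo \cite{PRT} and is not proved there, so there is no in-paper argument to compare yours against; what you have written is a from-scratch proof, and it is correct. Your organizing device --- that for irreducible monomial ideals $L'\subseteq L$ holds iff ${\rm supp}(L')\supseteq{\rm supp}(L)$ fails, i.e.\ iff the support shrinks and each surviving exponent weakly increases, so that minimality over $I(\mathcal{D})$ reduces to the single-step test ``no generator can be deleted and no exponent raised by one'' --- is the right one, and you do justify it implicitly; it would be worth one explicit line (given $I\subseteq L'\subsetneq L$, either drop a variable of ${\rm supp}(L)\setminus{\rm supp}(L')$ or raise one exponent to get an intermediate ideal). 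The delicate points all check out: the bound $a_i\le d_i$ via redundancy of too-high pure powers, $a_i=1$ on $L_1(C)$ forced by an edge leaving $C$, $a_i=d_i$ on $L_2(C)\cup L_3(C)$ because outgoing edges are covered by their targets so the exponent can otherwise be raised, and, crucially, the fact that for $x\in L_3(C)$ the only edges that can witness irredundancy of $x^{d(x)}$ are incoming arrows $(y,x)$ with $a_y\ge 2$, which is verbatim the strong-cover condition (note that $L_3(C)=\emptyset$ exactly when $C$ is a minimal vertex cover, so your version of the condition agrees with the paper's ``or else'' phrasing). The converse direction is likewise complete. Two cosmetic cautions: you reuse the symbol $a_i$ for the exponents of both $L$ and $L'$ in the order-theoretic remark, and your closing paragraph reads as if the $L_3(C)$ analysis were still to be done when in fact you have already carried it out.
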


\begin{theorem}{\rm\cite{PRT}}\label{pitones-reyes-toledo} If
$\mathcal{D}$ is a weighted oriented  
graph and $\Upsilon(\mathcal{D})$ is the set of all strong vertex covers of 
$\mathcal{D}$, then the irreducible decomposition of $I(\mathcal{D})$
is 
$$
I(\mathcal{D})=\bigcap_{C\in\Upsilon(\mathcal{D})}I_C, 
$$
where $I_C=(L_1(C)\cup\{x_i^{d_i}\vert\, x_i\in L_2(C)\cup L_3(C)\})$. 
\end{theorem}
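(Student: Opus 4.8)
The plan is to obtain Theorem~\ref{pitones-reyes-toledo} as a direct consequence of the combinatorial characterization of the minimal irreducible ideals in Theorem~\ref{Pitones-Reyes}, combined with the identification of irreducible components with minimal irreducible ideals in Proposition~\ref{apr23-17}. First I would recall that, by the discussion preceding Proposition~\ref{apr23-17}, the monomial ideal $I(\mathcal{D})$ admits a unique irredundant irreducible decomposition $I(\mathcal{D})=I_1\cap\cdots\cap I_m$, and that Proposition~\ref{apr23-17} identifies the components $I_1,\ldots,I_m$ precisely as the minimal irreducible monomial ideals of $I(\mathcal{D})$.

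Next I would invoke Theorem~\ref{Pitones-Reyes}, which asserts that $L$ is a minimal irreducible monomial ideal of $I(\mathcal{D})$ if and only if $L=I_C$ for some strong vertex cover $C\in\Upsilon(\mathcal{D})$. Hence the set $\{I_1,\ldots,I_m\}$ of irreducible components coincides, as a set, with $\{I_C\mid C\in\Upsilon(\mathcal{D})\}$. Since intersecting a family of ideals indexed by $\Upsilon(\mathcal{D})$ (in which distinct strong covers may yield the same $I_C$) produces the same ideal as intersecting over the underlying set of distinct components, one concludes $I(\mathcal{D})=\bigcap_{C\in\Upsilon(\mathcal{D})}I_C$, as claimed.

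The main obstacle is not in this final bookkeeping step but in Theorem~\ref{Pitones-Reyes} itself, which carries all the combinatorial weight. If one had to establish that characterization from scratch, the delicate points would be twofold. First, one must verify that each $I_C$ contains $I(\mathcal{D})$: this amounts to checking, for every arrow $(x_i,x_j)\in E(\mathcal{D})$, that the generator $x_ix_j^{d_j}$ lands in $I_C$ according to the partition $C=L_1(C)\sqcup L_2(C)\sqcup L_3(C)$ (for instance, if $x_i\in L_1(C)$ the factor $x_i$ already lies in $I_C$, whereas if $x_i\notin C$ one argues on the side $x_j$, using that $C$ is a vertex cover). Second, one must establish that each $I_C$ is \emph{minimal} irreducible over $I(\mathcal{D})$ and that there are no further minimal irreducible ideals; here the strong–cover condition imposed on the vertices of $L_3(C)$ — the existence of an arrow $(y,x)$ with $y\in L_2(C)\cup L_3(C)$ and $d(y)\geq 2$ — is exactly what prevents $I_C$ from being shrunk to a smaller irreducible ideal still containing $I(\mathcal{D})$, and conversely guarantees that covers violating it do not contribute a genuine component.

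Since the characterization is already available to us as Theorem~\ref{Pitones-Reyes}, the present statement reduces to the formal combination described in the first two paragraphs, and no additional argument beyond Proposition~\ref{apr23-17} and the uniqueness of the irredundant irreducible decomposition is required.
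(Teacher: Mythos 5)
Your proposal is correct and follows exactly the paper's own argument: the paper likewise derives the decomposition in one line from Proposition~\ref{apr23-17} together with Theorem~\ref{Pitones-Reyes}. The extra commentary on how one would prove Theorem~\ref{Pitones-Reyes} itself is reasonable but not needed, since that result is taken as given from \cite{PRT}.
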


\begin{proof} This follows at once 
from Proposition~\ref{apr23-17} and Theorem~\ref{Pitones-Reyes}. \qed
\end{proof}

\begin{corollary}{\rm\cite{PRT}}\label{prt-main} Let $\mathcal{D}$ be a weighted 
oriented graph. Then $\mathfrak{p}$ is an associated prime of
$I(\mathcal{D})$ if and only if $\mathfrak{p}=(C)$ for some strong
vertex cover $C$ of $\mathcal{D}$.
\end{corollary}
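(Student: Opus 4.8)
The plan is to read the associated primes straight off the irreducible decomposition furnished by Theorem~\ref{pitones-reyes-toledo}. First I would invoke that theorem to write
$I(\mathcal{D})=\bigcap_{C\in\Upsilon(\mathcal{D})}I_C$, where
$I_C=(L_1(C)\cup\{x_i^{d_i}\,\vert\, x_i\in L_2(C)\cup L_3(C)\})$.
Since every irreducible monomial ideal is primary, this is a primary decomposition of $I(\mathcal{D})$; and since, as recorded in Section~\ref{irreducible-deco}, the irreducible decomposition of the edge ideal of a weighted oriented graph is minimal \cite{PRT}, it is in fact a minimal irredundant primary decomposition of $I(\mathcal{D})$.

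Next I would compute the radical of each component. The generators of $I_C$ are the variables $x_i$ with $x_i\in L_1(C)$ together with the pure powers $x_i^{d_i}$ with $x_i\in L_2(C)\cup L_3(C)$. Taking radicals generator by generator, as one may for a monomial ideal, and using that $C$ is the disjoint union $L_1(C)\cup L_2(C)\cup L_3(C)$ (recall $L_2(C)=C\setminus(L_1(C)\cup L_3(C))$), I obtain $\sqrt{I_C}=(x_i\,\vert\, x_i\in C)=(C)$. In particular each $I_C$ is $(C)$-primary, so the set of radicals of the primary components of $I(\mathcal{D})$ is exactly $\{(C)\,\vert\, C\in\Upsilon(\mathcal{D})\}$. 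Note also that the assignment $C\mapsto(C)$ is injective on vertex covers, since $(C)$ recovers $C$ as its set of generating variables, so distinct strong vertex covers yield distinct primes.

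Finally I would recall that the associated primes of $R/I(\mathcal{D})$ are precisely the radicals of the primary components occurring in a minimal primary decomposition. Combining this with the previous step gives that $\mathfrak{p}\in\mathrm{Ass}(R/I(\mathcal{D}))$ if and only if $\mathfrak{p}=(C)$ for some strong vertex cover $C$ of $\mathcal{D}$, which is the assertion. The only delicate point is the appeal to minimality: the ``only if'' direction needs merely that the $I_C$ form a primary decomposition, whereas the ``if'' direction — that every strong vertex cover genuinely contributes an associated prime — rests on the irredundant minimality of the decomposition in Theorem~\ref{pitones-reyes-toledo}, established in \cite{PRT}. Granting that, the argument reduces to the elementary radical computation above.
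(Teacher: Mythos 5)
Your argument is correct and is exactly the derivation the paper intends: Corollary~\ref{prt-main} is stated immediately after Theorem~\ref{pitones-reyes-toledo} with no separate proof, the point being that the irreducible decomposition $I(\mathcal{D})=\bigcap_{C}I_C$ is an irredundant primary decomposition whose components have pairwise distinct radicals $\sqrt{I_C}=(C)$, so the first uniqueness theorem identifies $\mathrm{Ass}(R/I(\mathcal{D}))$ with $\{(C)\,\vert\,C\in\Upsilon(\mathcal{D})\}$. Your observation that $C\mapsto(C)$ is injective even makes the appeal to the cited minimality remark dispensable, since irredundancy is already built into the paper's definition of the irreducible decomposition.
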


\begin{example} Let $K$ be the field of rational numbers and 
let $\mathcal{D}$ be the weighted digraph of Fig.~\ref{digraph1} 
\begin{figure}
\centering
	\begin{tikzpicture}[line width=1.1pt,scale=0.95]
	\tikzstyle{every node}=[inner sep=0pt, minimum width=4.5pt]
	
	\draw [->] (-0.05,1.66)--(-1.49,.51); 
	\draw [->] (-1.05,-1.3)--(-1.5,.3); 
	\draw [->] (1.4,.4) --(-1.4,.4); 
	\draw [->] (-.9,-1.38)--(.9,-1.38); 
	\draw [->] (1.5,.4)--(1.01,-1.28); 
	\draw [->] (-0.95,-1.3)--(0,1.6); 
	
	\draw (-1,-1.38) node (v3) [draw, circle, fill=gray] {};
	\draw (1,-1.38) node (v4) [draw, circle, fill=gray] {};
	\draw (1.5,.4) node (v5) [draw, circle, fill=gray] {};
	\draw (0,1.7) node (v1)[draw, circle, fill=gray] {};
	\draw (-1.5,.4) node (v2) [draw, circle, fill=gray] {};

	\node at (-1.5,-1.38) {$x_{3}$};
	\node at (1.5,-1.38) {$x_{4}$};
	\node at (1.7,.8) {$x_{5}$};
	\node at (0,2.1) {$x_{1}$};
	\node at (-1.8,.8) {$x_{2}$};
	
	\node at (-.9,-1.7) {$d_3=1$};
	\node at (1.1,-1.7) {$d_4=2$};
	\node at (2.1,.10) {$d_5=1$};
	\node at (0.6,1.4) {$d_1=2$};
	\node at (-2.1,.10) {$d_2=2$};
	
	\end{tikzpicture}
\caption{A Cohen--Macaulay digraph}
\label{digraph1}
\end{figure}
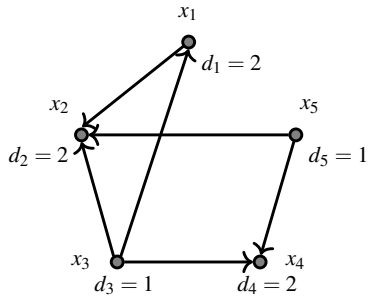
\noindent whose edge ideal is $I=I(\mathcal{D})=(x_1^2x_3,\, x_1x_2^2,\,
x_3x_2^2,\, x_3x_4^2,\, x_4^2x_5,\,  x_2^2x_5)$. By Theorem~\ref{pitones-reyes-toledo}, the irreducible
decomposition  of $I$ is 
$$
I=(x_1^2,\, x_2^2,\, x_4^2)\cap
(x_1,\, x_3,\, x_5)\cap
(x_2^2,\, x_3,\, x_4^2)\cap
(x_2^2,\, x_3,\, x_5).
$$

Using {\em Macaulay\/}$2$ \cite{mac2}, we get that $I$ is a
Cohen--Macaulay ideal whose Rees algebra is
Cohen-Macaulay and whose integral closure is 
$$
\overline{I}=I+(x_1x_2x_3,\,x_1x_3x_4,\,x_2x_3x_4,\,x_2x_4x_5).
$$

We note that the Cohen--Macaulayness of both $I$ and
its Rees algebra is destroyed (or recovered) by a single stroke of
reversing the edge orientation of $(x_5,x_2)$. This also destroys 
the unmixedness property of $I$.
\end{example}

In the summer of 2017 Antonio Campillo asked in a seminar at the University
of Valladolid if there was anything special if we take an
oriented graph $\mathcal{D}$ with underlying graph $G$ and set $d_i$
equal to $\deg_G(x_i)$ for $i=1,\ldots,n$. It will turn out that in 
determining the Cohen--Macaulay property of $\mathcal{D}$ one can
always make this canonical choice of weights. 

\begin{lemma}\label{aug12-17} Let $I\subset R$ be a monomial ideal,
let $x_i$ be a variable and let $h_1,\ldots,h_r$ be the monomials of $G(I)$ where
$x_i$ occurs. If $x_i$ occurs in $h_j$ with exponent $1$ for all $j$
and $m$ is a positive integer, then $I$ is
Cohen--Macaulay of height $g$ if and only if
$((G(I)\setminus\{h_j\}_{j=1}^r)\cup\{x_i^mh_j\}_{j=1}^r)$ is Cohen--Macaulay of
height $g$. 
\end{lemma}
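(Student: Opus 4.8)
The plan is to split the claimed equivalence into two invariances under the passage from $I$ to $I':=((G(I)\setminus\{h_j\}_{j=1}^r)\cup\{x_i^m h_j\}_{j=1}^r)$: invariance of the height and invariance of the Cohen--Macaulay property. Writing $h_j=x_iu_j$ with $x_i\nmid u_j$, so that $x_i^m h_j=x_i^{m+1}u_j$, I would first check that $(G(I)\setminus\{h_j\})\cup\{x_i^m h_j\}$ really is the minimal generating set $G(I')$: multiplying the $h_j$ by $x_i^m$ creates no divisibility relation among the generators that was not already present, so no redundancy is introduced. Once both invariances are in place, the equivalence ``$I$ Cohen--Macaulay of height $g$ $\Leftrightarrow I'$ Cohen--Macaulay of height $g$'' is immediate, since $R/I$ is Cohen--Macaulay exactly when $\depth(R/I)=\dim(R/I)$.

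For the height, the key observation is that replacing $h_j=x_iu_j$ by $x_i^{m+1}u_j$ does not change the support of any generator, so $\sqrt{I}=\sqrt{I'}$. Hence $I$ and $I'$ have exactly the same minimal primes, and therefore the same height $g$ and $\dim(R/I)=\dim(R/I')$. This step is elementary and self-contained, and in particular it justifies the common value $g$ appearing in both halves of the statement.

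The heart of the proof, and the step I expect to be the main obstacle, is the equality $\depth(R/I)=\depth(R/I')$. Here the hypothesis that $x_i$ occurs to the first power in \emph{every} generator it divides is indispensable: it is exactly the condition under which the exponent of $x_i$ can be raised ``for free''. Conceptually I would make this precise through polarization. Since $x_i$ has exponent $1$ in $I$, its full polarization $I^{\mathrm{pol}}$ leaves $x_i$ unsplit and is a squarefree ideal $P$; in $I'$ the variable $x_i$ has exponent $m+1$, and one checks that $(I')^{\mathrm{pol}}$ is precisely the polarization of the ideal obtained from $P$ by raising $x_i$ to the power $m+1$, the polarization of all other variables being identical for $I$ and $I'$. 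As polarization preserves the Cohen--Macaulay property, this reduces the claim to the squarefree ideal $P$ and isolates the essential content.

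To settle the reduced statement I would invoke the result of \cite{depth-monomial}, which controls exactly how the depth of the quotient depends on the exponent of a variable that divides to the first power each minimal generator containing it, and which yields that this exponent change preserves depth. Applying it (to $P$ after the polarization step, or directly to $I$ and $I'$ if the cited statement applies without the squarefree reduction) gives $\depth(R/I)=\depth(R/I')$, and combined with the height computation this completes the proof. The delicate points are bookkeeping ones: confirming that the variables other than $x_i$ polarize identically in $I$ and $I'$, so that the two polarizations differ only in the controlled block $x_iy_1\cdots y_m$ attached to $x_i$, and verifying that the exact-exponent-one hypothesis is precisely what the cited depth result requires.
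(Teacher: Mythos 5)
Your reductions are sound as far as they go: the observation that $\sqrt{I}=\sqrt{I'}$ (the supports of the generators are unchanged), hence that $I$ and $I'$ share their minimal primes, height and dimension, is correct, as is the check that no new divisibilities arise among the generators. The polarization bookkeeping is also correct: $(I')^{\rm pol}$ is the polarization of the ideal obtained from $P=I^{\rm pol}$ by replacing $x_i$ with $x_i^{m+1}$. But notice that this step only reduces the lemma to the special case where $I$ is squarefree --- it is the same statement again, not a solved one --- so all of the content is concentrated in your final citation, and that is where the argument breaks. The only result from \cite{depth-monomial} available here is Theorem~\ref{morey-vila-oaxaca-2017}, whose hypotheses are $p\geq 1$ and $q-p\geq 2$, where $q$ is the top $x_i$-degree among the generators and $p$ is the top $x_i$-degree among the generators \emph{not} attaining $q$. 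In the present situation every generator containing $x_i$ contains it to the \emph{same} power ($q=1$, $p=0$ for $I$; $q=m+1$, $p=0$ for $I'$), so $p\geq 1$ fails for both ideals and the theorem cannot be applied, nor iterated to step the exponent down from $m+1$ to $1$. Your description of the cited result as ``yielding that this exponent change preserves depth'' is essentially a restatement of what needs to be proved rather than of what the theorem says.

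The paper disposes of the lemma by citing \cite[Lemmas~3.3 and 3.5]{lattice-dim1}, which directly treat the substitution $x_i\mapsto x_i^{m+1}$. If you want a self-contained argument in the spirit of your proposal, the cleanest route is the one the paper records (via Trung) immediately after this lemma: the $K$-algebra endomorphism $\varphi$ of $R$ sending $x_i\mapsto x_i^{m+1}$ and fixing the other variables makes $R$ a free module of rank $m+1$ over $\varphi(R)\cong R$, hence is faithfully flat with Artinian (so Cohen--Macaulay) fibers, and carries $I$ to an ideal with extension $I'$; therefore $\depth(R/I)$ and the Cohen--Macaulay property transfer between $R/I$ and $R/I'$, and combined with your radical computation this gives the statement. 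As written, however, your proof has a genuine gap at the depth-preservation step.
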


\begin{proof} It follows at once from \cite[Lemmas~3.3 and
3.5]{lattice-dim1}. \qed
\end{proof}

It was pointed out to us by Ng\^{o} Vi\^{e}t Trung that the next
proposition follows from the fact that the map $x_i \to y_i^{d_i}$ (replacing $x_i$ by
$y_i^{d_i}$) defines a faithfully flat
homomorphism from $K[X]$ to $K[Y]$.

\begin{proposition} Let $I$ be a squarefree monomial ideal and let
$d_i=d(x_i)$ be a weighting of the variables. If
$G'$ is set of monomials obtained from $G(I)$ by replacing each $x_i$
with $x_i^{d_i}$, then $I$ is Cohen--Macaulay if and only if
$I'=(G')$ is Cohen--Macaulay.
\end{proposition}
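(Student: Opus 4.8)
The plan is to reduce the proposition to an iterated application of Lemma~\ref{aug12-17}, changing the exponent of one variable at a time. Since $I$ is squarefree, every variable $x_i$ occurs with exponent $0$ or $1$ in each element of $G(I)$, so passing from $I$ to $I'$ amounts to replacing, for each index $i$ with $d_i\geq 2$, the exponent $1$ of $x_i$ by $d_i$ in every generator in which $x_i$ appears, while leaving the variables with $d_i=1$ alone.

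First I would set up the induction. Let $x_{i_1},\ldots,x_{i_s}$ be the variables with $d_{i_j}\geq 2$, and build a chain of monomial ideals $I=J_0,J_1,\ldots,J_s=I'$, where $J_k$ is obtained from $J_{k-1}$ by processing the variable $x_{i_k}$: if $h_1,\ldots,h_r$ are the generators of $J_{k-1}$ in which $x_{i_k}$ occurs, replace each $h_j$ by $x_{i_k}^{\,d_{i_k}-1}h_j$. Taking $m=d_{i_k}-1\geq 1$ in Lemma~\ref{aug12-17}, this raises the exponent of $x_{i_k}$ from $1$ to $d_{i_k}$ in exactly those generators, which is precisely the part of the transformation defining $I'$ that concerns the single variable $x_{i_k}$.

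The key point to verify is the hypothesis of Lemma~\ref{aug12-17} at each step, namely that at the moment $x_{i_k}$ is processed it still occurs with exponent $1$ in every generator of $J_{k-1}$ containing it. This holds because the previous steps only altered the exponents of the \emph{distinct} variables $x_{i_1},\ldots,x_{i_{k-1}}$ and left the exponent of $x_{i_k}$ untouched at its original squarefree value $1$. Hence Lemma~\ref{aug12-17} applies and gives that $J_{k-1}$ is Cohen--Macaulay of height $g$ if and only if $J_k$ is Cohen--Macaulay of height $g$. Chaining these equivalences along $J_0,\ldots,J_s$ yields that $I$ is Cohen--Macaulay if and only if $I'$ is, as desired. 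I expect the only delicate point to be this bookkeeping---ensuring the ``exponent $1$'' hypothesis persists---which is handled simply by observing that distinct variables are processed independently, so the order in which they are treated is immaterial.

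Alternatively, as indicated in the remark preceding the statement, one may argue that the $K$-algebra map sending $x_i\mapsto y_i^{d_i}$ into a second polynomial ring is faithfully flat with Cohen--Macaulay (indeed complete intersection) fibers, whence the Cohen--Macaulay property both descends and ascends along the extension $I'=IS$. The inductive argument above makes this conceptual fact concrete and keeps the proof self-contained within the tools already developed in the paper.
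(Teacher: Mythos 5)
Your proof is correct and follows essentially the same route as the paper, which likewise deduces the proposition by applying Lemma~\ref{aug12-17} to each variable in turn; your write-up merely makes explicit the bookkeeping (processing the variables with $d_i\geq 2$ one at a time with $m=d_i-1$, and noting that earlier steps do not disturb the exponent-$1$ hypothesis for later variables) that the paper leaves implicit. The alternative faithfully flat argument you mention is also the one the paper records in the preceding remark, attributed to Ng\^{o} Vi\^{e}t Trung.
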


\begin{proof} It follows applying Lemma~\ref{aug12-17} to each
$x_i$. \qed 
\end{proof}

If a vertex $x_i$ is a {\it sink\/} (i.e., 
has only arrows entering $x_i$), the next result shows that the
Cohen-Macaulay property of $I(\mathcal{D})$ is independent of the
weight of $x_i$.  

\begin{corollary} If $x_i$ is a sink of a weighted 
oriented graph $\mathcal{D}$ and $\mathcal{D}'$ is the digraph obtained
from $\mathcal{D}$ by replacing $d_i$ with $d_i=1$. Then
$I(\mathcal{D})$ is Cohen--Macaulay if and only if $I(\mathcal{D}')$
is Cohen--Macaulay.
\end{corollary}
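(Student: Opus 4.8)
The plan is to derive this directly from Lemma~\ref{aug12-17}, the single-variable exponent-rescaling criterion stated just above; the only real work is to verify that the sink hypothesis places us exactly in the situation covered by that lemma, applied to the variable $x_i$. First I would dispose of the trivial case: if $d_i=1$ then $\mathcal{D}=\mathcal{D}'$ and there is nothing to prove, so I assume $d_i\geq 2$.

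Next I would analyze which generators of $I(\mathcal{D})$ involve $x_i$. Since $x_i$ is a sink, it is never the tail of an arrow, so it never contributes a factor coming from the left slot of a generator $x_ax_b^{d_b}$; it can only appear as the head, i.e. in generators $x_ax_i^{d_i}$ arising from the arrows $(x_a,x_i)\in E(\mathcal{D})$ entering $x_i$. Passing to $\mathcal{D}'$ replaces these by $x_ax_i$, so in $I(\mathcal{D}')$ the variable $x_i$ occurs with exponent exactly $1$ in every generator in which it occurs, and with exponent $0$ in all the others. This is precisely the hypothesis of Lemma~\ref{aug12-17} for the variable $x_i$.

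With this in hand I would apply Lemma~\ref{aug12-17} to the ideal $I(\mathcal{D}')$, taking $h_1,\ldots,h_r$ to be the generators $x_{a_j}x_i$ involving $x_i$ and setting $m=d_i-1\geq 1$. Then $x_i^m h_j=x_{a_j}x_i^{d_i}$, so the ideal $\big((G(I(\mathcal{D}'))\setminus\{h_j\}_{j=1}^r)\cup\{x_i^m h_j\}_{j=1}^r\big)$ is precisely $I(\mathcal{D})$: the generators not involving $x_i$ are untouched, while those involving $x_i$ recover the weight $d_i$ on $x_i$. Lemma~\ref{aug12-17} then yields that $I(\mathcal{D}')$ is Cohen--Macaulay of height $g$ if and only if $I(\mathcal{D})$ is, which gives the desired equivalence, the preservation of the height $g$ being automatic since it is built into the statement of the lemma.

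The step that needs genuine care is the combinatorial bookkeeping of the middle paragraph: I must be sure that, because $x_i$ is a sink, the generators of $I(\mathcal{D}')$ involving $x_i$ are exactly the squarefree monomials $x_{a_j}x_i$ and that these are genuine minimal generators, so that the set fed into Lemma~\ref{aug12-17} is the correct one. This is immediate from the definition of $I(\mathcal{D})$ together with the fact that a sink is never a tail, but it is the only place where the sink hypothesis, as opposed to the weaker statement for an arbitrary vertex, is actually used.
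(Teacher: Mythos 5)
Your proposal is correct and is essentially the argument the paper intends: the corollary is stated as an immediate consequence of Lemma~\ref{aug12-17}, applied to the variable $x_i$ with $h_1,\ldots,h_r$ the generators $x_{a_j}x_i$ of $I(\mathcal{D}')$ and $m=d_i-1$, exactly as you do. Your verification that the sink hypothesis forces $x_i$ to occur only with exponent $1$ in $G(I(\mathcal{D}'))$, and that the rescaled set recovers $G(I(\mathcal{D}))$, is the only content needed and you supply it.
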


That is, to determine whether or not an oriented graph $\mathcal{D}$ is
Cohen--Macaulay one may assume that all sources and sinks 
have weight $1$. In particular if all vertices of $\mathcal{D}$ are either sources of
sinks and $G$ is its underlying graph, then $I(\mathcal{D})$ is Cohen--Macaulay
if and only if $I(G)$ is Cohen--Macaulay.

Let $I$ be a monomial ideal and let $x_i$ be a fixed variable that occurs 
in $G(I)$. Let $q$ be the maximum of the degrees in $x_i$ of the
monomials of $G(I)$ and let $\mathcal{B}_i$ be the set of all monomial
of $G(I)$ of degree in $x_i$ equal to $q$. For use below we set  
$$
\mathcal{A}_i:=\{x^a\vert\, \deg_{x_i}(x^a)<q\}\cap G(I)=
G(I)\setminus\mathcal{B}_i,
$$
$p:=\max\{\deg_{x_i}(x^a)\vert\, x^a\in\mathcal{A}_i\}$ and 
$L:=(\{x^a/x_i\vert\,x^a\in\mathcal{B}_i\}\cup\mathcal{A}_i\})$.

\begin{theorem}{\rm\cite{depth-monomial}}\label{morey-vila-oaxaca-2017}
Let $I$ be a monomial ideal. If $p\geq 1$, and $q-p\geq 2$, then
$$\depth(R/I)=\depth(R/L).$$  
\end{theorem}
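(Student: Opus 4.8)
The plan is to reduce to the single variable $x:=x_i$ and run an induction on $p$ built from the two short exact sequences produced by multiplication by $x$. Write $S:=K[x_1,\ldots,\widehat{x_i},\ldots,x_n]$, so $R=S[x]$; split $G(I)=\mathcal{A}_i\sqcup\mathcal{B}_i$ as in the statement, set $J:=(\mathcal{A}_i)$, and let $K\subset S$ be the ideal generated by the $x$-free parts of the monomials of $\mathcal{B}_i$. Then $I=J+x^qK$ and $L=J+x^{q-1}K$, every generator of $J$ has $x$-degree at most $p$, and the hypothesis $q-p\geq 2$ says precisely that $q-1>p$, so the generators $x^{q-1}K$ of $L$ still lie strictly above $J$ in $x$-degree. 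In particular $I\subset L$; and since $x\cdot x^{q-1}K=x^qK\subset I$, the module $L/I$ is annihilated by $x$, hence is an $S$-module.

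First I would record two identities. Reducing modulo $x$ absorbs every generator of positive $x$-degree into $(x)$, so $I+(x)=(x)+(\mathcal{A}_i^0)=L+(x)$, where $\mathcal{A}_i^0$ denotes the $x$-free generators; thus $R/(I+(x))=R/(L+(x))$. Forming the colon with $x$ lowers $x$-degrees by one, giving $(I:x)=(J:x)+x^{q-1}K$ and $(L:x)=(J:x)+x^{q-2}K$. Hence the passage $I\rightsquigarrow L$ commutes with $(-:x)$: the ideals $(I:x)$ and $(L:x)$ again have the shape of the statement, with parameters $(p-1,q-1)$ and the same gap, and when $p=0$ one has $(J:x)=J$ and the colon collapses to the clean identity $(I:x)=L$. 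Since $I\subset L$, the two multiplication-by-$x$ sequences fit into a morphism of short exact sequences
\[
\begin{tikzcd}
0 \arrow[r] & R/(I:x) \arrow[r, "x"] \arrow[d] & R/I \arrow[r] \arrow[d] & R/(I+(x)) \arrow[r] \arrow[d, equal] & 0 \\
0 \arrow[r] & R/(L:x) \arrow[r, "x"] & R/L \arrow[r] & R/(L+(x)) \arrow[r] & 0
\end{tikzcd}
\]
whose right-hand vertical map is the identity and whose left-hand map $\alpha$ is the natural surjection.

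I would then induct on $p\geq 0$ (the case $p\geq 1$ is the theorem, and $p=0$ is natural as base since there $(I:x)=L$). The inductive step feeds the induction hypothesis $\depth R/(I:x)=\depth R/(L:x)$, together with $R/(I+(x))=R/(L+(x))$, into the depth lemma applied to the two rows of the diagram; this already forces $\depth R/I=\depth R/L$ in every configuration except the one discussed below. The base case amounts to the assertion that $\depth R/(J+x^sK)$ is independent of $s\geq1$ when $J$ is $x$-free; here stratifying $R/(J+x^sK)$ by $x$-degree and cutting the filtration at the top slice gives
\[
0\longrightarrow R/(J+K)R\longrightarrow R/(J+x^sK)\longrightarrow R/(J+(x^s))\longrightarrow 0,
\]
whose left term is independent of $s$ and whose right term has depth $\depth S/J$ for all $s$, so that the base case reduces to the same connecting-map comparison as the step.

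That remaining configuration is the main obstacle. The depth lemma reads the middle depth of a short exact sequence off the two outer depths in all cases except when the sub-object has strictly larger depth than the quotient: there, writing $c:=\depth R/(I+(x))$, the middle depth equals $c$ or $c+1$ according to whether the connecting map $\delta_I\colon H^{c}_{\mathfrak m}(R/(I+(x)))\to H^{c+1}_{\mathfrak m}(R/(I:x))$ is non-injective or injective, and this is invisible to depths alone. Here the morphism of sequences is decisive: $\delta_I$ and $\delta_L$ share the source $H^{c}_{\mathfrak m}(R/(I+(x)))$ and satisfy $\delta_L=\alpha_*\circ\delta_I$, so $\ker\delta_I\subseteq\ker\delta_L$ is automatic and only the reverse inclusion in degree $c+1$ is at stake. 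This is exactly where the gap $q-p\geq2$ must be used: the kernel of $\alpha$ is once more an $x$-torsion $S$-module of the same type as $L/I$, and the gap should make it cohomologically negligible in the critical degree, so that $\alpha_*$ is injective there and $\ker\delta_I=\ker\delta_L$. I expect this local-cohomology comparison --- equivalently, a degree-by-degree verification through Takayama's formula that $H^\bullet_{\mathfrak m}(R/I)$ and $H^\bullet_{\mathfrak m}(R/L)$ agree in the relevant internal degrees --- to be the one genuinely technical point, everything else reducing to formal properties of the two sequences.
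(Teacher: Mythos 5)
Your structural setup is sound and genuinely different from the paper's: the identities $I+(x_i)=L+(x_i)$ and $(I:x_i)=(J:x_i)+x_i^{q-1}K$, and the morphism between the two multiplication-by-$x_i$ sequences, are all correct. But the proof has a real gap exactly at the point you flag and do not resolve: the ambiguous case of the depth lemma. When $\depth R/(I:x_i)=\depth R/(L:x_i)=c+1$ and $\depth R/(I+(x_i))=c$, the inductive hypothesis ``the two colon ideals have equal depth'' is strictly weaker than what you need; you must show $\delta_I$ is injective if and only if $\delta_L$ is, and the relation $\delta_L=\alpha_*\circ\delta_I$ gives only one implication (namely $\depth R/L\le\depth R/I$). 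Your proposed fix --- that $\ker\alpha=(L:x_i)/(I:x_i)$ is ``cohomologically negligible'' in degree $c+1$ --- is unsubstantiated: this is indeed a module killed by $x_i$ (since $x_iL\subseteq I$ gives $L\subseteq(I:x_i)$), but its depth has no a priori relation to $c$, so there is no reason for $H^{c+1}_{\mathfrak m}$ of it to vanish or for $\alpha_*$ to be injective on the image of $\delta_I$. To close the induction you would have to strengthen the inductive hypothesis from an equality of depths to a statement about the induced maps on local cohomology (or actually carry out the Takayama-formula computation you allude to), and neither is done. The same unresolved comparison sits underneath your base case $p=0$, whose filtration argument ends in the identical ambiguity.

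The paper's proof sidesteps all of this by composing two operations whose effect on depth is an exact equality rather than an inequality: a partial polarization of $x_i^q$ using $q-2$ new variables raises depth by exactly $q-2$, and --- this is precisely where $q-p\ge 2$ enters, forcing every generator of intermediate $x_i$-degree to lose $x_i$ entirely after polarization --- the polarized ideals $I^{\rm pol}$ and $L^{\rm pol}$ then differ only in that $x_i$ occurs with exponent $2$ versus $1$ in the generators containing it, so the substitution $x_i^2\mapsto x_i$ preserves depth by \cite[Lemmas~3.3 and 3.5]{lattice-dim1}. If you want to salvage your route, that substitution lemma also settles your base case immediately; the inductive step, however, still requires the local-cohomology comparison you have only sketched.
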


\begin{proof} To simplify notation we set $i=1$. We may assume that
$G(I)=\{f_1,\ldots,f_r\}$, where $f_1,\ldots,f_m$ are all the elements 
of $G(I)$ that contain $x_1^q$ and
$f_{m+1},\ldots,f_s$ are all the elements of $G(I)$ that 
contain some positive power $x_1^\ell$ of $x_1$ for some $1\leq
\ell<q$. Let $X'=\{x_{1,2},\ldots,x_{1,q-1}\}$ be a set of new
variables.
If $f=x_1^sf'$ is a monomial with $\gcd(x_1,f')=1$, we write
$f^{\rm pol}=x_{1,2}\cdots x_{1,t+1}x_1^{s-t}f'$ where
$t=\min(q-2,s)$. Making a
partial polarization of $x_1^q$ with respect to the new variables
$x_{1,2},\ldots,x_{1,q-1}$ \cite[p.~203]{monalg-rev}, gives that $f_i$ polarizes to 
$f_i^{\rm pol}=x_{1,2}\cdots x_{1,q-1}x_1^2f_i'$ for $i=1,\ldots,m$,
where $f_1',\ldots,f_m'$ are monomials that do not contain $x_1$ and
$f_i=x_1^qf_i'$ for $i=1,\ldots,m$.  
Hence, using that $q-p\geq 2$, one has the partial polarization
$$
I^{\rm pol}=(x_{1,2}\cdots x_{1,q-1}x_1^2f_1',\ldots,x_{1,2}\cdots
x_{1,q-1}x_1^2f_m',f_{m+1}^{\rm pol},\ldots,f_s^{\rm
pol},f_{s+1},\ldots,f_r),
$$
where $f_{m+1}^{\rm pol},\ldots,f_s^{\rm pol}$ do not contain $x_1$
and $I^{\rm pol}$ is an ideal of $R^{\rm
pol}=R[x_{1,2},\ldots,x_{1,q-1}]$. 
On the other hand, 
one has the partial polarization
$$
L^{\rm pol}=(x_{1,2}\cdots x_{1,q-1}x_1f_1',\ldots,x_{1,2}\cdots
x_{1,q-1}x_1f_m',f_{m+1}^{\rm pol},\ldots,f_s^{\rm
pol},f_{s+1},\ldots,f_r).
$$

By making the substitution
$x_1^{2}\rightarrow x_1$ in
each element of $G(I^{\rm pol})$ this will not affect the depth of
$R^{\rm pol}/I^{\rm pol}$ (see \cite[Lemmas~3.3 and
3.5]{lattice-dim1}). Thus 
$$
q-2+\depth(R/I)=
\depth(R^{\rm pol}/I^{\rm pol})=\depth(R^{\rm pol}/L^{\rm
pol})=q-2+\depth(R/L),
$$
and consequently $\depth(R/I)=\depth(R/L)$. \qed
\end{proof}

\begin{corollary}\label{oct20-17} Let $I=I(\mathcal{D})$ be the edge ideal of a
vertex-weighted oriented graph with vertices $x_1,\ldots,x_n$ 
and let $d_i$ be the weight of $x_i$. If $\mathcal{U}$ is the digraph
obtained from $\mathcal{D}$ by assigning weight $2$ to every vertex $x_i$
with $d_i\geq 2$, then $I$ is Cohen--Macaulay if and only 
if $I(\mathcal{U})$ is Cohen--Macaulay. 
\end{corollary}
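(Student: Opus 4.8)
The plan is to deform $\mathcal{D}$ into $\mathcal{U}$ one vertex at a time, lowering each weight $d_j\ge 3$ to $2$ in unit steps, and to check that every step preserves Cohen--Macaulayness. Since altering the weight at $x_j$ changes only the exponent of $x_j$ in the generators $x_ix_j^{d_j}$ coming from arrows $(x_i,x_j)$ into $x_j$, while leaving every $x_k$-exponent untouched for $k\ne j$, the vertices may be treated independently and in any order; it therefore suffices to handle a single weight. The basic observation I would record first is that in $I(\mathcal{D})$ a variable $x_j$ occurs only with exponent $0$, $1$, or $d_j$: exponent $d_j$ exactly in the generators $x_ix_j^{d_j}$ from arrows into $x_j$, and exponent $1$ exactly in the generators $x_jx_k^{d_k}$ from arrows out of $x_j$. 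As $d_j\ge 2$ forces $x_j$ to be a non-source, hence the head of some arrow, the top exponent is $q=d_j$, and the runner-up exponent is $p=1$ when $x_j$ also has an outgoing arrow and $p=0$ when $x_j$ is a sink.

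For a non-sink $x_j$ I would invoke Theorem~\ref{morey-vila-oaxaca-2017} with the distinguished variable equal to $x_j$. Here $p=1$ and $q-p=d_j-1\ge 2$ as long as $d_j\ge 3$, so the hypotheses hold, and the ideal $L$ produced by the theorem is obtained by replacing each $x_ix_j^{d_j}$ with $x_ix_j^{d_j-1}$ and keeping all remaining generators; that is, $L=I(\mathcal{D}')$ as ideals, where $\mathcal{D}'$ is $\mathcal{D}$ with the weight at $x_j$ lowered by one. Thus $\depth(R/I(\mathcal{D}))=\depth(R/I(\mathcal{D}'))$, and iterating down to weight $2$ (the hypotheses persist at every intermediate weight $\ge 3$, the final step being $3\to 2$ with $q-p=2$) leaves the depth unchanged.

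For a sink $x_j$ the theorem no longer applies, since $p=0$; instead I would use Lemma~\ref{aug12-17} (this is exactly the content of the corollary on sinks above). Applying that lemma to the weight-$1$ incarnation $\mathcal{D}_1$ of $\mathcal{D}$ at $x_j$, where $x_j$ sits to the first power in each generator containing it, once with multiplier $m=d_j-1$ and once with $m=1$, shows that both $I(\mathcal{D})$ and its weight-$2$ version are Cohen--Macaulay of the same height precisely when $I(\mathcal{D}_1)$ is; hence lowering a sink's weight to $2$ again preserves Cohen--Macaulayness.

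To promote the depth equalities of the non-sink case to equivalences of Cohen--Macaulayness, I still need the Krull dimension to stay constant. This is automatic: the radical $\sqrt{I(\mathcal{D})}=I(G)$ depends only on the underlying graph $G$, and every reduction above keeps the $x_j$-exponent $\ge 1$ in each affected generator, so the squarefree part---hence the radical, hence $\dim(R/I(\mathcal{D}))$---is unchanged. Combining depth invariance with dimension invariance shows Cohen--Macaulayness is preserved at each step, and chaining the steps from $\mathcal{D}$ to $\mathcal{U}$ gives the asserted equivalence. The delicate point, and the one I would be most careful about, is the hypothesis bookkeeping for Theorem~\ref{morey-vila-oaxaca-2017}: recognizing the rigid exponent profile $\{0,1,d_j\}$ that makes the theorem applicable exactly at non-sink vertices, identifying its output $L$ literally with the weight-reduced edge ideal, and routing the leftover sink case---where $p=0$---through Lemma~\ref{aug12-17} instead.
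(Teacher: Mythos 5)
Your proposal follows the paper's proof essentially verbatim: both reduce each weight $d_j\ge 3$ to $2$ by iterated application of Theorem~\ref{morey-vila-oaxaca-2017} (with $q=d_j$, $p=1$, hence $q-p=d_j-1\ge 2$ at every step) and then conclude from the equality of heights. Your separate treatment of sinks---where $p=0$, so the theorem's hypotheses fail and one must instead route the argument through Lemma~\ref{aug12-17}---is a correct refinement of a point the paper's one-line proof passes over silently, and your identification of the ideal $L$ with the weight-reduced edge ideal is accurate.
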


\begin{proof} By applying Theorem~\ref{morey-vila-oaxaca-2017} to each
vertex $x_i$ of $\mathcal{D}$ of weight at least $3$, we obtain 
that $\depth(R/I(\mathcal{D}))$ is equal to
$\depth(R/I(\mathcal{U}))$. Since $I(\mathcal{D})$ and $I(\mathcal{U})$ have the
same height, then $I(\mathcal{D})$ is Cohen-Macaulay if and only if 
$I(\mathcal{U})$ is Cohen--Macaulay.  \qed
\end{proof}

\begin{lemma}{\rm \cite[Theorem~16.3(4),
p.~200]{Har}}\label{acyclic-char}  Let $\mathcal D$ be an oriented 
graph. Then $\mathcal{D}$ is {\it acyclic}, 
i.e., $\mathcal{D}$ has no oriented cycles,
if and only if there is a linear  
ordering of the vertex set $V(\mathcal{D})$ such that all the 
edges of $\mathcal D$ are of the form 
$(x_i,x_j)$ with $i<j$. 
\end{lemma}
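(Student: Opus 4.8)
This is the classical topological-sort characterization of acyclic digraphs, so the plan is to prove the two implications separately, the nontrivial one by induction on the number of vertices.

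The implication ($\Leftarrow$) is immediate. Suppose $V(\mathcal{D})=\{x_1,\ldots,x_n\}$ is ordered so that every edge $(x_i,x_j)$ of $\mathcal{D}$ satisfies $i<j$. Along any directed walk the indices of the successive vertices strictly increase, so no directed walk can return to its starting vertex; hence $\mathcal{D}$ has no oriented cycle.

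For the converse ($\Rightarrow$) I would first isolate the key combinatorial fact: every nonempty finite acyclic digraph has a \emph{sink}, i.e.\ a vertex with no outgoing edge. To see this, start at an arbitrary vertex and repeatedly follow an outgoing edge as long as one exists; this produces a directed walk, and since $V(\mathcal{D})$ is finite, if no sink were ever reached the walk would eventually revisit a vertex, producing an oriented cycle and contradicting acyclicity. With this fact in hand I would induct on $|V(\mathcal{D})|$. The case of a single vertex is trivial. For the inductive step, choose a sink $x_n$ and delete it, obtaining an oriented acyclic digraph $\mathcal{D}'$ on $n-1$ vertices; by the inductive hypothesis $V(\mathcal{D}')$ admits a linear order $x_1<\cdots<x_{n-1}$ with all edges of $\mathcal{D}'$ pointing forward. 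Appending $x_n$ as the largest element extends the order, and because $x_n$ is a sink every edge of $\mathcal{D}$ meeting $x_n$ has the form $(x_i,x_n)$ with $i<n$, so the extended order enjoys the required property.

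The only real obstacle is the sink-existence lemma, and even it reduces to the pigeonhole observation that a sufficiently long directed walk in a finite digraph must repeat a vertex; the rest is bookkeeping. One could equally run the induction from a \emph{source} (a vertex of in-degree zero), deleting it and prepending it to the order, the argument being entirely symmetric.
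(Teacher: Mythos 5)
The paper offers no proof of this lemma; it is simply quoted from Harary \cite[Theorem~16.3(4)]{Har}, so there is no internal argument to compare against. Your proof is the standard topological-sort argument and is correct: the reverse implication because indices strictly increase along any directed walk, and the forward implication via the existence of a sink in a nonempty finite acyclic digraph (a maximal directed walk must either terminate at a sink or repeat a vertex and hence contain an oriented cycle), followed by induction on the number of vertices with the sink placed last.
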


A complete oriented graph is called a {\it tournament\/}. The next
result shows that weighted acyclic tournaments are Cohen--Macaulay.

\begin{corollary}\label{apr22-17} Let $\mathcal{D}$ be a weighted 
oriented graph. If the underlying graph $G$ of $\mathcal{D}$ is a complete graph 
and $\mathcal{D}$ has no oriented cycles, then $I(\mathcal{D})$ is
Cohen--Macaulay.
\end{corollary}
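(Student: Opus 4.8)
The plan is to show that $R/I(\mathcal{D})$ is Cohen--Macaulay by proving it is one-dimensional and then that it has positive depth. First I would invoke Lemma~\ref{acyclic-char} to relabel the vertices $x_1,\ldots,x_n$ so that every edge has the form $(x_i,x_j)$ with $i<j$; since $G$ is the complete graph $K_n$, this yields the explicit description $I=I(\mathcal{D})=(x_ix_j^{d_j}\mid 1\le i<j\le n)$. Because $\sqrt{(x_ix_j^{d_j})}=(x_ix_j)$, the radical of $I$ is the ordinary edge ideal $I(K_n)$, whose minimal primes correspond to the minimal vertex covers of $K_n$. Every minimal vertex cover of $K_n$ omits exactly one vertex and so has $n-1$ elements; hence all minimal primes of $I$ have height $n-1$, and therefore $\dim(R/I)=1$.

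With the dimension pinned down, the next step reduces Cohen--Macaulayness to a combinatorial condition. Since $0\le\depth(R/I)\le\dim(R/I)=1$, the ring $R/I$ is Cohen--Macaulay if and only if $\depth(R/I)=1$, equivalently if and only if the maximal ideal $\mathfrak{m}=(x_1,\ldots,x_n)$ is \emph{not} an associated prime of $R/I$. By Corollary~\ref{prt-main}, $\mathfrak{m}$ lies in $\mathrm{Ass}(R/I)$ precisely when the full vertex set $C=V(\mathcal{D})$ is a strong vertex cover of $\mathcal{D}$. Thus it suffices to prove that $C=V(\mathcal{D})$ is not a strong vertex cover.

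To finish I would compute $L_1(C),L_2(C),L_3(C)$ for $C=V(\mathcal{D})$. As $C$ contains every vertex, there is no $(x,y)\in E(\mathcal{D})$ with $y\notin C$, so $L_1(C)=\emptyset$; likewise $N_G(x)\subset C$ for every $x$, so $L_3(C)=C$ and $L_2(C)=\emptyset$. For $n\ge 2$ the cover $C=V(\mathcal{D})$ is not a minimal vertex cover of $K_n$ (those have $n-1$ vertices), so the strong-cover condition would require that every $x\in L_3(C)=V(\mathcal{D})$ admit an edge $(y,x)\in E(\mathcal{D})$ with $y\in L_2(C)\cup L_3(C)$ and $d(y)\ge 2$. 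However, in the chosen ordering the vertex $x_1$ is a source: no edge of $\mathcal{D}$ enters $x_1$, so no such $(y,x_1)$ exists and the condition fails at $x_1\in L_3(C)$. Hence $C=V(\mathcal{D})$ is not a strong vertex cover, $\mathfrak{m}\notin\mathrm{Ass}(R/I)$, and $R/I$ is Cohen--Macaulay.

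The argument is short; the only point needing care is the bookkeeping of $L_1,L_2,L_3$ for the total cover, together with the observation---forced by acyclicity via Lemma~\ref{acyclic-char}---that an acyclic orientation of $K_n$ always has a source with no in-edges, which is exactly what breaks the strong-cover condition. The degenerate case $n=1$ gives $I=0$, which is trivially Cohen--Macaulay. One could alternatively first apply Corollary~\ref{oct20-17} to reduce all weights to $\{1,2\}$, but the associated-prime computation above requires no such reduction.
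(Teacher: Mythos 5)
Your proof is correct and follows essentially the same route as the paper: both use Lemma~\ref{acyclic-char} to produce a source, observe that the full vertex set therefore fails to be a strong vertex cover, invoke Corollary~\ref{prt-main} to conclude $\mathfrak{m}\notin\mathrm{Ass}(R/I(\mathcal{D}))$, and combine $\depth\ge 1$ with $\dim(R/I(\mathcal{D}))=1$. Your explicit bookkeeping of $L_1,L_2,L_3$ and the verification that $\dim(R/I)=1$ via the radical are just spelled-out versions of steps the paper states more tersely.
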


\begin{proof} By Lemma~\ref{acyclic-char}, $\mathcal{D}$ has a source
$x_i$ for some $i$. Hence $\{x_1,\ldots,x_n\}$ is not a strong vertex cover
of $\mathcal{D}$ because there is no arrow entering $x_i$. Thus, by
Corollary~\ref{prt-main}, the maximal ideal
$\mathfrak{m}=(x_1,\ldots,x_n)$ cannot be an associated prime of
$I(\mathcal{D})$. Therefore $R/I(\mathcal{D})$ has depth at least $1$.
As $\dim(R/I(\mathcal{D}))=1$, we get that $R/I(\mathcal{D})$ is
Cohen--Macaulay. \qed
\end{proof}

The next result gives an interesting family of digraphs whose edge
ideals satisfy Alexander duality. Recall that a digraph $\mathcal D$ is called {\it
transitive} if for any two edges $(x_i,x_j)$,
$(x_j,x_k)$ in $E(\mathcal{D})$ with $i,j,k$ distinct, we have 
that $(x_i,x_k)\in E(\mathcal{D})$. Acyclic tournaments are transitive
and transitive oriented graphs are acyclic.

\begin{theorem}\label{alexander-duality-transitive} 
If $\mathcal{D}$ is a transitive oriented graph
and $I=I(\mathcal{D})$ is its edge ideal, then Alexander duality holds,
that is, $I^*=I^\vee$.
\end{theorem}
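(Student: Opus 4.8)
The plan is to show the two descriptions of the dual coincide by comparing their generating sets and irreducible decompositions directly. Recall that $I^\vee$ is generated by the products $\prod_{f\in G(I_i)}f$ over the irreducible components $I_i$ of $I$, while $I^*$ is the intersection $\bigcap_{x^a\in G(I)}(\{x_i^{a_i}\mid a_i\geq 1\})$. The strategy is to use the combinatorial description of both sides provided by Theorem~\ref{Pitones-Reyes} together with Lemma~\ref{duality-of-exponents}, exploiting the transitivity of $\mathcal{D}$ to control which vertex covers are strong and how the sets $L_1(C)$, $L_2(C)$, $L_3(C)$ behave.

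\textbf{First I would} unwind $I^*$. Since $G(I)=\{x_ix_j^{d_j}\mid (x_i,x_j)\in E(\mathcal{D})\}$, each generator $x_ix_j^{d_j}$ contributes the irreducible ideal $(x_i, x_j^{d_j})$, so
\[
I^*=\bigcap_{(x_i,x_j)\in E(\mathcal{D})}(x_i,\, x_j^{d_j}).
\]
Next I would unwind $I^\vee$. By Theorem~\ref{pitones-reyes-toledo} the irreducible components $I_C$ of $I$ are indexed by the strong vertex covers $C$ of $\mathcal{D}$, with $I_C=(L_1(C)\cup\{x_i^{d_i}\mid x_i\in L_2(C)\cup L_3(C)\})$, and by Lemma~\ref{duality-of-exponents} the generator of $I^\vee$ coming from $I_C$ is the product $\prod_{x\in L_1(C)}x\cdot\prod_{x\in L_2(C)\cup L_3(C)}x^{d}$. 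Thus both $I^*$ and $I^\vee$ are explicit monomial ideals governed by combinatorial data of $\mathcal{D}$, and the problem reduces to matching these two.

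\textbf{The key step} is to use transitivity to show that the strong vertex covers of a transitive oriented graph have a rigid structure: since transitive oriented graphs are acyclic (as noted just before the statement), every such $\mathcal{D}$ has a linear order on its vertices by Lemma~\ref{acyclic-char} in which all edges point forward. I expect that under transitivity the strong vertex covers are exactly the complements of the independent sets that are ``downward closed'' in an appropriate sense, and correspondingly the minimal irreducible ideals $I_C$ collapse to exactly the edge-wise ideals $(x_i,x_j^{d_j})$ appearing in $I^*$. Concretely, the plan is to prove a bijection between the strong vertex covers contributing to $I^\vee$ and the edges contributing to $I^*$, showing that for a transitive graph each minimal generator of one side divides a generator of the other and vice versa. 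Transitivity is what forces $L_3(C)$ and the ``higher'' covers either to be non-strong or to be redundant, so that only the two-element covers $\{x_i\}\cup\{x_j\}$ corresponding to single edges survive as minimal irreducible ideals; this is the crux that makes the decomposition of $I$ reduce to the intersection over edges.

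\textbf{The hard part will be} establishing precisely this reduction of the strong vertex covers, i.e.\ proving that for a transitive oriented graph the only minimal irreducible components of $I(\mathcal{D})$ are the ideals $(x_i,x_j^{d_j})$ for $(x_i,x_j)\in E(\mathcal{D})$, with no extra components arising from covers with nonempty $L_3$. I would handle this by taking an arbitrary strong vertex cover $C$ and using the linear order together with transitivity to locate a single edge $(x_i,x_j)$ with $x_i\in L_1(C)$ and $x_j\in L_2(C)\cup L_3(C)$ such that $(x_i,x_j^{d_j})$ already contains $I_C$-relevant data, then invoking minimality (Proposition~\ref{apr23-17}) to discard any cover larger than a single edge. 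Once the strong covers are pinned down to edges, the equality $I^*=I^\vee$ follows by comparing the two generating descriptions term by term, and I would close by remarking that this simultaneously recovers the classical squarefree case when all $d_j=1$.
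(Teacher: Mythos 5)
Your setup is fine --- the identification $I^*=\bigcap_{(x_i,x_j)\in E(\mathcal{D})}(x_i,\,x_j^{d_j})$ and the description of the generators of $I^\vee$ via Theorem~\ref{pitones-reyes-toledo} are both correct --- but the ``key step'' rests on a false claim, and the argument collapses there. You assert that for a transitive oriented graph the minimal irreducible components of $I(\mathcal{D})$ are exactly the two-generated ideals $(x_i,x_j^{d_j})$ for $(x_i,x_j)\in E(\mathcal{D})$. If that were true, then $I=\bigcap_{(x_i,x_j)\in E(\mathcal{D})}(x_i,x_j^{d_j})=I^*$, i.e.\ the edge ideal would equal its own dual, which fails already for the transitive star with edges $(x_1,x_2),(x_1,x_3)$: there $I=(x_1x_2^{d_2},x_1x_3^{d_3})=(x_1)\cap(x_2^{d_2},x_3^{d_3})$, while $(x_1,x_2^{d_2})\cap(x_1,x_3^{d_3})=(x_1,x_2^{d_2}x_3^{d_3})\supsetneq I$. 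More structurally, each irreducible component $I_C$ has height $|C|$, and $|C|$ is at least the vertex cover number of the underlying graph, so no component can be an edge-wise height-two ideal unless the graph is essentially a single edge. Transitivity does not ``collapse'' the strong vertex covers to edges; this is false even in the squarefree case, where the components are generated by minimal vertex covers of arbitrary size and Alexander duality is a genuine duality between generators and components, not an identification of the two decompositions. You appear to have conflated the irreducible decomposition of $I$ (which recovers $I$) with the intersection defining $I^*$ (which defines a different ideal).

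The paper's proof does not attempt any such classification of strong covers. It proves the two inclusions directly. For $I^\vee\subset I^*$ it takes the generator of $I^\vee$ attached to a strong cover $C$ and shows it lies in each $(x_i,x_j^{d_j})$, using transitivity once to rule out the case $x_j\in L_1(C)$ with $x_i\notin C$ (an out-neighbor of $x_j$ outside $C$ would, by transitivity, produce an edge missed by $C$). For $I^*\subset I^\vee$ it takes a minimal generator $x^\alpha$ of $I^*$, uses Lemma~\ref{duality-of-exponents} to see each exponent is $1$ or $d_i$, finds a strong cover $C$ inside the support $A$ of $x^\alpha$ via Proposition~\ref{apr23-17}, and then proves two claims locating the vertices of $A$ in $L_1(A)$ or $L_2(A)$; transitivity is used there to build a chain of edges terminating outside $A$. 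If you want to salvage your approach, you would need to replace the classification of strong covers with an argument of this divisibility type; as written, the reduction you propose cannot be carried out.
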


\begin{proof} ``$\supset$'': Take $x^a\in G(I^\vee)$.   According to 
Theorem~\ref{pitones-reyes-toledo}, there is a strong vertex 
cover $C$ of $\mathcal{D}$ such that 
\begin{equation}\label{may27-17}
x^a=\left(\prod_{x_k\in
L_1}x_k\right)\left(\prod_{x_k\in L_2\cup L_3}x_k^{d_k}\right),
\end{equation}
where $L_i=L_i(C)$ for $i=1,2,3$. Fix a monomial $x_ix_j^{d_j}$ in
$G(I(\mathcal{D}))$, that is, $(x_i,x_j)\in E(\mathcal{D})$. It
suffices to show that $x^a$ is in the ideal 
$I_{i,j}:=(\{x_i,x_j^{d_j}\})$. If $x_i\in C$, then by Eq.~(\ref{may27-17}) the
variable $x_i$ occurs in $x^a$ because $C$ is equal to $L_1\cup
L_2\cup L_3$. Hence $x^a$ is a multiple of $x_i$ and $x^a$ is in
$I_{i,j}$, as required. Thus we may assume that $x_i\notin C$. By 
Theorem~\ref{pitones-reyes-toledo} the ideal
$$
I_C=(L_1\cup\{x_k^{d_k}\vert\, x_k\in L_2\cup L_3\})
$$
is an irreducible component of $I(\mathcal{D})$ and $x_ix_j^{d_j}\in
I_C$.

Case (I): $x_ix_j^{d_j}\in(L_1)$. Then $x_ix_j^{d_j}=x_kx^b$ for some
$x_k\in L_1$. Hence, as $x_i\notin C$, we get $j=k$. Therefore, as 
$x_j\in L_1$, there is $x_\ell\notin C$ such that $(x_j,x_\ell)$ is
in $E(\mathcal{D})$. Using that $\mathcal{D}$ is transitive gives
$(x_i,x_\ell)\in E(\mathcal{D})$
 and $x_ix_\ell^{d_\ell}\in I(\mathcal{D})$. In particular
 $x_ix_\ell^{d_\ell}\in I_C$, a contradiction because $x_i$ and
 $x_\ell$ are not in $C$. Hence this case cannot occur.

Case (II): $x_ix_j^{d_j}\in(\{x_k^{d_k}\vert\, x_k\in L_2\cup
L_3\})$. Then $x_ix_j^{d_j}=x_k^{d_k}x^b$ for some $x_k\in L_2\cup
L_3$. As $x_i\notin C$, we get $j=k$ and by Eq.~(\ref{may27-17}) we obtain $x^a\in
I_{i,j}$, as required.

``$\subset$'': Take a minimal generator $x^\alpha$ of $I^*$. By
Lemma~\ref{duality-of-exponents}, for each $i$ either $\alpha_i=1$ or
$\alpha_i=d_i$. Consider the set $A=\{x_i\vert\, \alpha_i\geq 1\}$. 
We can write $A=A_1\cup A_2$, where $A_1$ (resp. $A_2$) is the set of all $x_i$ such
that $\alpha_i=1$ (resp. $\alpha_i=d_i\geq 2$). As $(A)$ contains $I$, 
from the proof of Proposition~\ref{apr23-17}, and using
Theorem~\ref{pitones-reyes-toledo}, there exists a strong vertex cover $C$ of $\mathcal{D}$ contained
in $A$ such that the ideal 
$$ 
I_C=(L_1(C)\cup\{x_i^{d_i}\vert\, x_i\in L_2(C)\cup L_3(C)\})
$$
is an irreducible component of $I(\mathcal{D})$. Thus it suffices to
show that any monomial of $G(I_C)$ divides $x^\alpha$ because this
would give $x^a\in I^\vee$.

Claim (I): If $x_k\in A_1$, then $d_k=1$ or $x_k\in L_1(A)$. Assume
that $d_k\geq 2$. Since $x^\alpha$ is a minimal generator of $I^*$,
the monomial $x^\alpha/x_k$ is not in $I^*$. Then there is and edge
$(x_i,x_j)$ such that $x^\alpha/x_k$ is not in the ideal
$I_{i,j}:=(\{x_i,x_j^{d_j}\})$. As $x^\alpha\in I^*$ and $d_k\geq 2$, one has that $x^\alpha$
is in $I_{i,j}$ and $i=k$. Notice that $x_j$ is not in
$A_2$ because $x^\alpha/x_k$ is not in $I_{k,j}$. If 
$x_j$ is not in $A_1$ the proof is complete because $x_k\in L_1(A)$. Assume that 
$x_k$ is in $A_1$. Then $d_j\geq 2$ because $x^\alpha/x_k$ is not in
$I_{k,j}$. Setting $k_1=k$ and $k_2=j$ and applying the previous
argument to $x^\alpha/x_{k_2}$, there is $x_{k_3}\notin A_2$ 
such that $(x_{k_2},x_{k_3})$ is in $E(\mathcal{D})$. Since
$\mathcal{D}$ is transitive, $(x_{k_1},x_{k_3})$ is in
$E(\mathcal{D})$. If $x_{k_3}$ is not in $A_1$ the proof is complete.
If $x_{k_3}$ is in $A_1$, then $d_{k_3}\geq 2$ and we can continue
using the previous argument. Suppose we have constructed
$x_{k_1},\ldots,x_{k_s}$ for some $s\leq r$ such that $x_{k_s}\notin
A_2$, and $(x_{k_1},x_{k_{s-1}})$ 
and $(x_{k_{s-1}},x_{k_s})$ are in $E(\mathcal{D})$. Since
$\mathcal{D}$ is transitive, $(x_{k_1},x_{k_s})$ is in
$E(\mathcal{D})$. If $x_{k_s}$ is not in $A_1$ the proof is complete.
If $x_{k_s}$ is in $A_1$ and $s<r$, then $d_{k_s}\geq 2$ and we can continue the
process. If $x_{k_s}$ is in $A_1$ and $s=r$, that is,
$A_1=\{x_{k_1},\ldots,x_{k_r}\}$, then applying the previous argument
to $x^\alpha/x_{k_r}$ there is $x_{r+1}$ not in $A$ such that
$(x_{r},x_{r+1})$ is in $E(\mathcal{D})$. Thus by transitivity
$(x_{k_1},x_{r+1})$ is in $E(\mathcal{D})$, that is, $x_{k_1}$ is in
$L_1(A)$.

Claim (II): If $x_k\in A_2$, then $x_k\in L_2(A)$. Since $x^\alpha\in
G(I^*)$ and $\alpha_k=d_k\geq 2$, there is $(x_i,x_k)$ in 
$E(\mathcal{D})$ such that $x^\alpha/x_k$ is not in
$I_{i,k}=(\{x_i,x_k^{d_k}\})$. In particular $x_i$ is not in $A$. To 
prove that $x_k$ is in $L_2(A)$ 
it suffices to show that $x_k$ is not in $L_1(A)$. If $x_k$ is in
$L_1(A)$, there is $x_j$ not in $A$ such that $(x_k,x_j)$ is in
$E(\mathcal{D})$. As $\mathcal{D}$ is transitive, we get that
$(x_i,x_j)$ is in $E(\mathcal{D})$ and $A\cap\{x_i,x_j\}=\emptyset$, 
a contradiction because $(A)$ contains $I$.

Take a monomial $x_k^{a_k}$ of $G(I_C)$. 

Case (A): $x_k\in L_1(C)$. Then $a_k=1$. There is $(x_k,x_j)\in E(\mathcal{D})$ with
$x_j\notin C$. Notice $x_k\in A_1$. Indeed if $x_k\in A_2$, then $x_k$
is in $L_2(A)$ because of Claim (II). Then there is $(x_i,x_k)$ in
$E(\mathcal{D})$ with $x_i\notin A$. By transitivity $(x_i,x_j)\in
E(\mathcal{D})$ and $\{x_i,x_j\}\cap C=\emptyset$, a contradiction
because $(C)$ contains $I$. Thus $x_k\in A_1$, that is, $\alpha_k=1$.
This proves that $x_k^{a_k}$ divides $x^\alpha$.

Case (B): $x_k\in L_2(C)$. Then $x_k^{a_k}=x_k^{d_k}$. First assume 
$x_k\in A_1$. Then, by Claim (I), $d_k=1$ or $x_k\in L_1(A)$. Clearly
$x_k\notin L_1(A)$ because $L_1(A)\subset L_1(C)$ and $x_k$---being
in $L_2(C)$---cannot be in $L_1(C)$. Thus $d_k=1$ and $x_k^{d_k}$
divides $x^\alpha$. Next assume $x_k\in A_2$. Then, by construction of $A_2$, 
$x_k^{d_k}$ divides $x^\alpha$.  

Case (C): $x_k\in L_3(C)$. Then $x_k^{a_k}=x_k^{d_k}$. First assume 
$x_k\in A_1$. Then, by Claim (I), $d_k=1$ or $x_k\in L_1(A)$. Clearly
$x_k\notin L_1(A)$ because $L_1(A)\subset L_1(C)$ and $x_k$---being
in $L_3(C)$---cannot be in $L_1(C)$. Thus $d_k=1$ and $x_k^{d_k}$
divides $x^\alpha$. Next assume $x_k\in A_2$. Then, by construction of $A_2$, 
$x_k^{d_k}$ divides $x^\alpha$.  \qed
\end{proof}

\begin{corollary}\label{jun1-17} If $\mathcal{D}$ is a weighted
acyclic tournament, then $I(\mathcal{D})^*=I(\mathcal{D})^\vee$, that is, Alexander duality
holds. 
\end{corollary}

\begin{proof} The result follows readily from
Theorem~\ref{alexander-duality-transitive} because acyclic
tournaments are transitive.  \qed
\end{proof}

\begin{example} Let $\mathcal{D}$ be the weighted oriented graph whose
edges and weights are 
$$(x_2,x_1),(x_3,x_2),(x_3,x_4),(x_3,x_1),$$
and $d_1=1,d_2=2,d_3=1, d_4=1$, respectively. This digraph is transitive. Thus 
$I(\mathcal{D})^*=I(\mathcal{D})^\vee$. 
\end{example}

\begin{example}\label{example-alex-duality} The irreducible decomposition of the ideal 
$I=(x_1x_2^2,x_1x_3^2,x_2x_3^2)$ is 
$$
I=(x_1,x_2)\cap(x_1,x_3^2)\cap(x_2^2,x_3^2),
$$
in this case
$I^\vee=(x_1x_2,x_1x_3^2,x_2^2x_3^2)=(x_1,x_2^2)\cap(x_1,x_3^2)\cap(x_2,x_3^2)=I^*$.
\end{example}

\begin{example}\label{example-alex-duality-1} The irreducible decomposition of the ideal 
$I=(x_1x_2^2,x_3x_1^2,x_2x_3^2)$ is 
$$
I=(x_1^2,x_2)\cap(x_1,x_3^2)\cap(x_2^2,x_3)\cap(x_1^2,x_2^2,x_3^2),
$$
in this case
$I^\vee=(x_1^2x_2,x_1x_3^2,x_2^2x_3)\subsetneq
(x_1,x_2^2)\cap(x_3,x_1^2)\cap(x_2,x_3^2)=I^*$.
\end{example}

\begin{example} The irreducible decomposition of the ideal 
$I=(x_1x_2^2,x_1^2x_3)$ is 
$$
I=(x_1)\cap(x_1^2,x_2^2)\cap(x_3,x_2^2),
$$
in this case $I^\vee=(x_1,x_2^2x_3)\supsetneq 
I^*=(x_1,x_2^2)\cap(x_1^2,x_3)=(x_1^2,x_1x_3,x_2^2x_3)$.
\end{example}

We come to the main result of this section.

\begin{theorem}\label{cm-weighted-oriented-trees}
Let $\mathcal{D}$ be a weighted oriented forest without isolated
vertices and let $G$ be its underlying forest. The following
conditions are equivalent:
\begin{enumerate}
\item[$(a)$] $\mathcal{D}$ is Cohen--Macaulay.
\item[$(b)$] $I(\mathcal{D})$ is unmixed, that is, all its associated
primes have the same height.
\item[$(c)$] $G$ has a 
perfect matching $\{x_1,y_1\},\ldots,\{x_r,y_r\}$ so that 
$\deg_G(y_i)=1$ for $i=1,\ldots, r$ and $d(x_i)=d_i=1$ if
$(x_i,y_i)\in E(\mathcal{D})$.
\end{enumerate}
\end{theorem}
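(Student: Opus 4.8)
The plan is to prove the cycle of implications $(a) \Rightarrow (b) \Rightarrow (c) \Rightarrow (a)$. The implication $(a) \Rightarrow (b)$ is essentially free: a Cohen--Macaulay ring is unmixed (all associated primes of $R/I(\mathcal{D})$ have the same dimension, hence the same height), so this needs only the standard fact that Cohen--Macaulayness forces equidimensionality. The bulk of the work lies in $(b) \Rightarrow (c)$ and $(c) \Rightarrow (a)$, and the combinatorial engine throughout will be Theorem~\ref{pitones-reyes-toledo} together with its Corollary~\ref{prt-main}, which translate the associated primes of $I(\mathcal{D})$ into strong vertex covers of $\mathcal{D}$.

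\textbf{The implication $(b) \Rightarrow (c)$.} Assume $I(\mathcal{D})$ is unmixed. The first step is to exploit that the underlying graph $G$ is a forest whose edge ideal (the squarefree case, all $d_i=1$) must be unmixed as well, so I would first invoke the known classification of unmixed (equivalently, in the tree case, Cohen--Macaulay) bipartite graphs: a forest $G$ is unmixed if and only if it has a perfect matching $\{x_1,y_1\},\ldots,\{x_r,y_r\}$ with each $y_i$ a leaf, i.e. $\deg_G(y_i)=1$. This gives the matching structure in $(c)$; the new content is the weight condition $d_i=1$ whenever $(x_i,y_i)\in E(\mathcal{D})$. To extract it, I would argue by contradiction: if some $(x_i,y_i)\in E(\mathcal{D})$ has $d(x_i)=d_i\geq 2$, then $y_i$ being a leaf forces $x_i$ to dominate $y_i$ in a way that lets one build a strong vertex cover $C$ containing $x_i$ in $L_3(C)$ (since the edge points into $x_i$ with weight $\geq 2$, the defining condition of a strong cover can be met), producing an associated prime of height strictly larger than the minimal height $r$. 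Concretely, one constructs two strong vertex covers of different cardinalities and contradicts unmixedness via Corollary~\ref{prt-main}, which says heights of associated primes are cardinalities of strong vertex covers.

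\textbf{The implication $(c) \Rightarrow (a)$.} Here I would first reduce the weights using the machinery of Section~\ref{cm-digraphs}: by Corollary~\ref{oct20-17}, $\mathcal{D}$ is Cohen--Macaulay iff $\mathcal{U}$ (all weights $\geq 2$ replaced by $2$) is, and by the sink corollary one may normalize the weights of sinks to $1$. Since each $y_i$ is a leaf, it is either a source or a sink relative to the single edge at it; when $(x_i,y_i)\in E(\mathcal{D})$ the hypothesis already forces $d_i=1$ on $x_i$, and when $(y_i,x_i)\in E(\mathcal{D})$ the vertex $y_i$ is a source and carries weight $1$ by the standing convention. The plan is then to show directly that under condition $(c)$ the only strong vertex cover is the minimal one $\{x_1,\ldots,x_r\}$, so that $I(\mathcal{D})$ is unmixed of height $r=\dim R - r$; combined with a depth computation---either by showing $\operatorname{depth}(R/I(\mathcal{D}))=r$ via the perfect-matching structure, or by applying Theorem~\ref{morey-vila-oaxaca-2017}/Corollary~\ref{oct20-17} to reduce to the underlying-forest edge ideal $I(G)$, which is Cohen--Macaulay by the classical leaf/perfect-matching criterion---one concludes Cohen--Macaulayness.

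\textbf{The main obstacle.} The hardest part will be the careful bookkeeping in $(b)\Rightarrow(c)$: deriving the weight constraint $d_i=1$ on the matched vertex $x_i$ when $(x_i,y_i)\in E(\mathcal{D})$. One must show that a violating weight $d_i\geq 2$ genuinely produces a \emph{strong} vertex cover of the wrong cardinality, which requires verifying the $L_3$ membership condition from the definition (there must exist an incoming edge $(y,x_i)$ with $y\in L_2(C)\cup L_3(C)$ and $d(y)\geq 2$) or else showing the cover is minimal. Since $y_i$ is a leaf, the local structure at $x_i$ is constrained but the incoming edges to $x_i$ from other matched pairs must be tracked, and it is precisely the orientation and weights of those edges that determine whether a strong cover containing the full neighborhood of $x_i$ exists. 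Handling every orientation of the edge $\{x_i,y_i\}$ and the auxiliary edges at $x_i$ is where the proof will demand the most care.
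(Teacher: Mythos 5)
Your overall architecture ($(a)\Rightarrow(b)\Rightarrow(c)\Rightarrow(a)$, with the Pitones--Reyes--Toledo correspondence between associated primes and strong vertex covers as the engine, and the classification of unmixed forests from \cite{Vi2} supplying the matching) agrees with the paper. But both of the hard implications have genuine gaps as you have sketched them.

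In $(b)\Rightarrow(c)$ the missing idea is the actual cover. Your sketch puts $x_i$ in $L_3(C)$ and speaks of ``the edge pointing into $x_i$,'' but $(x_i,y_i)$ points \emph{out} of $x_i$; in the correct argument it is the leaf $y_i$ that lands in $L_3(C)$, and the strong-cover condition for $y_i$ is certified by the incoming edge from $x_i$, which must therefore be placed in $L_2(C)$ with $d_i\geq 2$. The naive candidate $C=\{x_1,\ldots,x_r\}\cup\{y_i\}$ does not work, because then $N_G(x_i)\subset C$, so $x_i$ itself falls into $L_3(C)$ and one would have to certify \emph{it} by a heavy incoming edge, which need not exist. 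The paper's fix is to delete the in-neighbors $N_{\mathcal H}^-(x_i)$ of $x_i$ (in the auxiliary digraph $\mathcal H$ on the matched vertices) from the cover and replace them by their pendant partners; this forces $x_i\in L_2(C)$, makes $L_3(C)=\{y_i\}$, and yields a strong cover of cardinality $r+1$. Without some such modification the contradiction does not go through, and this is exactly the bookkeeping you flag as ``the main obstacle'' but do not resolve.

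In $(c)\Rightarrow(a)$ the route you lean on is invalid. Corollary~\ref{oct20-17} and Theorem~\ref{morey-vila-oaxaca-2017} only let you lower weights down to $2$ (the theorem needs $q-p\geq 2$, and for a non-source vertex of weight $2$ one has $q=2$, $p=1$); they never reduce $I(\mathcal D)$ to the squarefree ideal $I(G)$. Indeed, if such a reduction existed the weight clause in $(c)$ would be vacuous and the theorem would assert ``$\mathcal D$ is CM iff $G$ is CM,'' which is false --- the weights genuinely matter here. Separately, ``the only strong vertex cover is the minimal one $\{x_1,\ldots,x_r\}$'' cannot be right as stated (every minimal vertex cover of $G$ is strong, and there are many), and even the corrected statement --- all strong covers have cardinality $r$ --- only gives unmixedness, not Cohen--Macaulayness. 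The paper instead argues by induction on $r$: it splits into the two cases $(y_r,x_r)\in E(\mathcal D)$ and $(x_r,y_r)\in E(\mathcal D)$, computes $(I(\mathcal D):x_r^{d_r})$ and $(I(\mathcal D),x_r^{d_r})$ (resp.\ with $x_r$ in place of $x_r^{d_r}$) in terms of smaller weighted oriented forests, and concludes via the short exact sequence and the depth lemma. Some argument of this kind is needed to produce the depth bound; your proposal does not supply one.
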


\begin{proof} It suffices to show the result when $G$ is connected, 
that is, when $\mathcal{D}$ is an oriented tree. Indeed $\mathcal{D}$
is Cohen--Macaulay (resp. unmixed) if and only if all connected
components of $\mathcal{D}$ are Cohen--Macaulay (resp. unmixed)
\cite{PRT,Vi2}. 
 
$(a)$ $\Rightarrow$ $(b)$: This implication follows from the general fact that
Cohen--Macaulay graded ideals are unmixed \cite[Corollary~3.1.17]{monalg-rev}.  

$(b)$ $\Rightarrow$ $(c)$: According to the results of \cite{Vi2} one has
that  $|{V(G)}|=2r$ and $G$ has a 
perfect matching $\{x_1,y_1\},\ldots,\{x_r,y_r\}$ so that 
$\deg_G(y_i)=1$ for $i=1,\ldots,r$. Consider the oriented graph
$\mathcal{H}$ with vertex set $V(\mathcal{H})=\{x_1,\ldots,x_r\}$
whose edges are all $(x_i,x_j)$ such that $(x_i,x_j)\in
E(\mathcal{D})$. As $\mathcal{H}$ is acyclic, by
Lemma~\ref{acyclic-char}, we may assume that the vertices of
$\mathcal{H}$ have a ``topological'' order, that is, if $(x_i,x_j)\in
E(\mathcal{H})$, then $i<j$. If 
$(y_i,x_i)\in E(\mathcal{D})$ for $i=1,\ldots,r$, there is nothing to
prove. Assume that $(x_k,y_k)\in
E(\mathcal{D})$ for some $k$. To complete the proof we need only 
show that $d(x_k)=d_k=1$. We proceed by
contradiction assuming that $d_k\geq 2$. In particular $x_k$ cannot be
a source of $\mathcal{H}$. Setting
$X=\{x_1,\ldots,x_r\}$, consider the set of vertices
$$
C=(X\setminus N_\mathcal{H}^-(x_k))\cup\{y_i\vert\, x_i\in
N_\mathcal{H}^-(x_k)\}\cup\{y_k\},
$$
where $N_\mathcal{H}^-(x_k)$ is the {\it in-neighbor} set of $x_k$ 
consisting of all $y\in V(\mathcal{H})$ such that $(y,x_k)\in
E(\mathcal{H})$. Clearly $C$ is a vertex cover of $G$ with $r+1$
elements because the set $N_\mathcal{H}^-(x_k)$ is an independent set
of $G$. Let us show that $C$ is a strong cover of $\mathcal{D}$. 
The set $N_\mathcal{H}^-(x_k)$ is not empty because $x_k$ is not a
source of $\mathcal{D}$. Thus $x_k$ is not in $L_3(C)$. Since 
$L_3(C)\subset\{x_k,y_k\}\subset C$, we get $L_3(C)=\{y_k\}$. There is
no arrow of $\mathcal{D}$ with source at $x_k$ and head outside of
$C$, that is, $x_k$ is in $L_2(C)$. Hence $(x_k,y_k)$ is in
$E(\mathcal{D})$ with $x_k\in L_2(C)$ and $d(x_k)\geq 2$. This means
that $C$ is a strong cover of $\mathcal{D}$. Applying
Theorem~\ref{prt-main} gives that $\mathfrak{p}=(C)$ is an associated
prime of $I(\mathcal{D})$ with $r+1$ elements, a contradiction because
$I(\mathcal{D})$ is an unmixed ideal of height $r$.

$(c)$ $\Rightarrow$ $(a)$: We proceed by induction on $r$. The case $r=1$ is
clear because $I(\mathcal{D})$ is a principal ideal, hence
Cohen--Macaulay. Let $\mathcal{H}$ be the graph defined in the proof
of the previous implication. As
before we may assume that 
the vertices of $\mathcal{H}$ are in topological order and we set
$R=K[x_1,\ldots,x_r,y_1,\ldots,y_r]$. 

Case (I): Assume that $(y_r,x_r)\in E(\mathcal{D})$. Then $x_r$ is a sink 
of $\mathcal{D}$ (i.e., has only arrows entering $x_r$). Using the
equalities
$$
(I(\mathcal{D})\colon x_r^{d_r})=(N_G(x_r),I(\mathcal{D}\setminus
N_G(x_r)))\ \mbox{ and }\
(I(\mathcal{D}),x_r^{d_r})=(x_r^{d_r},I(\mathcal{D}\setminus\{x_r\})),
$$
and applying the induction hypothesis to $I(\mathcal{D}\setminus
N_G(x_r))$ and $I(\mathcal{D}\setminus\{x_r\})$ we obtain that the
ideals $(I(\mathcal{D})\colon x_r^{d_r})$ and
$(I(\mathcal{D}),x_r^{d_r})$ are Cohen--Macaulay of dimension $r$.
Therefore, as $I(\mathcal{D})$ has height $r$, from the
exact sequence  
$$
0\rightarrow R/(I(\mathcal{D})\colon
x_r^{d_r})[-d_r]\stackrel{x_r^{d_r}}{\rightarrow}
R/I(\mathcal{D})\rightarrow R/(I(\mathcal{D}),x_r^{d_r})\rightarrow 0
$$
and using the depth lemma (see \cite[Lemma~2.3.9]{monalg-rev}) we obtain that $I(\mathcal{D})$ is
Cohen--Macaulay.

Case (II): Assume that $(x_r,y_r)\in E(\mathcal{D})$. Then
$d(x_r)=d_r=1$ and $x_ry_r^{e_r}\in I(\mathcal{D})$, 
where $d(y_r)=e_r$. Using the
equalities
$$
(I(\mathcal{D})\colon x_r)=(N_G(x_r)\setminus\{y_r\},y_r^{e_r},I(\mathcal{D}\setminus
N_G(x_r)))\ \mbox{ and }\
(I(\mathcal{D}),x_r)=(x_r,I(\mathcal{D}\setminus\{x_r\})),
$$
and applying the induction hypothesis to $I(\mathcal{D}\setminus
N_G(x_r))$ and $I(\mathcal{D}\setminus\{x_r\})$ we obtain that the
ideals $(I(\mathcal{D})\colon x_r)$ and
$(I(\mathcal{D}),x_r)$ are Cohen--Macaulay of dimension $r$.
Therefore, as $I(\mathcal{D})$ has height $r$, from the
exact sequence  
$$
0\rightarrow R/(I(\mathcal{D})\colon
x_r)[-1]\stackrel{x_r}{\rightarrow}
R/I(\mathcal{D})\rightarrow R/(I(\mathcal{D}),x_r)\rightarrow 0
$$
and using the depth lemma \cite[Lemma~2.3.9]{monalg-rev} we obtain that $I(\mathcal{D})$ is
Cohen--Macaulay. \qed
\end{proof}

The following result was conjectured in a preliminary version of this
paper and proved recently in \cite{Ha-Lin-Morey-Reyes-Vila} using 
polarization of monomial ideals.

\begin{theorem}{\rm\cite[Theorem~3.1]{Ha-Lin-Morey-Reyes-Vila}}\label{cm-weighted-oriented-whiskers}
Let $\mathcal{D}$ be a weighted oriented graph and let $G$ be its
underlying graph. Suppose that $G$ has a 
perfect matching $\{x_1,y_1\},\ldots,\{x_r,y_r\}$ where
$\deg_G(y_i)=1$ for each $i$. The following 
conditions are equivalent:
\begin{enumerate}
\item[$(a)$] $\mathcal{D}$ is Cohen--Macaulay.
\item[$(b)$] $I(\mathcal{D})$ is unmixed, that is, all its associated
primes have the same height.
\item[$(c)$] $d(x_i)=1$ for any edge of $\mathcal{D}$ of the form 
$(x_i,y_i)$.
\end{enumerate}
\end{theorem}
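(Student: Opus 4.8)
The plan is to prove the three implications separately: $(a)\Rightarrow(b)$ is formal, $(b)\Rightarrow(c)$ comes from the strong vertex cover description of the associated primes, and $(c)\Rightarrow(a)$ is obtained by the inductive peeling argument of Theorem~\ref{cm-weighted-oriented-trees} when possible and by polarization in general. For $(a)\Rightarrow(b)$ there is nothing to do, since a Cohen--Macaulay graded ideal is unmixed by \cite[Corollary~3.1.17]{monalg-rev}. Throughout I record that the radical of $I(\mathcal D)$ is $I(G)$ and that $\{x_1,\dots,x_r\}$ is a vertex cover of $G$ of size $r$; as $G$ has a perfect matching of size $r$, every vertex cover has at least $r$ vertices, so the minimal primes of $I(\mathcal D)$ all have height $r$ and $\mathrm{ht}\,I(\mathcal D)=r$.

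For $(b)\Rightarrow(c)$ I argue by contraposition, producing a strong vertex cover of cardinality $r+1$ whenever $(c)$ fails; by Corollary~\ref{prt-main} this yields an associated prime of height $r+1$, contradicting unmixedness. Suppose $(x_k,y_k)\in E(\mathcal D)$ with $d_k\ge 2$. Then $x_k$ is not a source, so some arrow enters it; this arrow cannot come from $y_k$ (the edge $\{x_k,y_k\}$ is oriented out of $x_k$) nor from any $y_j$ with $j\ne k$ (adjacent only to $x_j$), hence $(x_j,x_k)\in E(\mathcal D)$ for some core vertex $x_j$, $j\ne k$. Set
$$
C=\bigl(\{x_1,\dots,x_r\}\setminus\{x_j\}\bigr)\cup\{y_j,y_k\}.
$$
Then $|C|=r+1$ and $C$ is a vertex cover of $G$: the whisker edge $\{x_j,y_j\}$ is covered by $y_j$, every other whisker edge by its $x_i$, and every core edge has at most the single endpoint $x_j$ outside $C$. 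Inspecting neighborhoods, the only vertex of $C$ all of whose neighbors lie in $C$ is $y_k$, so $L_3(C)=\{y_k\}$; moreover $x_k\notin L_1(C)$, because the absence of $2$-cycles gives $(x_k,x_j)\notin E(\mathcal D)$ so $x_j$ is not an out-neighbor of $x_k$ and all remaining out-neighbors of $x_k$ lie in $C$, while $x_k\notin L_3(C)$ since $x_j\notin C$; thus $x_k\in L_2(C)$. As $(x_k,y_k)\in E(\mathcal D)$ with $x_k\in L_2(C)$ and $d_k\ge2$, the strong-cover condition holds at the unique vertex $y_k$ of $L_3(C)$, so $C$ is a strong vertex cover.

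For $(c)\Rightarrow(a)$ I would first attempt to copy the induction of Theorem~\ref{cm-weighted-oriented-trees}. Writing $\mathcal H$ for the oriented graph induced on $\{x_1,\dots,x_r\}$, a sink $x_r$ of $\mathcal H$ lets one split the short exact sequences built from $(I(\mathcal D):x_r^{d_r})$ and $(I(\mathcal D),x_r^{d_r})$ when $(y_r,x_r)\in E(\mathcal D)$, or from $(I(\mathcal D):x_r)$ and $(I(\mathcal D),x_r)$ when $(x_r,y_r)\in E(\mathcal D)$ (whence $d_r=1$ by $(c)$); deleting the pair $\{x_r,y_r\}$ preserves the whisker hypothesis and condition $(c)$ on $r-1$ pairs, so the induction hypothesis together with the depth lemma \cite[Lemma~2.3.9]{monalg-rev} closes the argument exactly as in the forest case. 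The main obstacle is that $\mathcal H$ need \emph{not} be acyclic: a strongly connected core has neither a sink nor a source, so $x_r$ unavoidably acquires core out-neighbors, the colon ideals no longer collapse to edge ideals of the deletions, and the peeling breaks down.

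To bypass this I would reduce weights to $\{1,2\}$ by Corollary~\ref{oct20-17}, which preserves Cohen--Macaulayness and, since the strong-cover test only asks whether a weight is $\ge2$, also preserves unmixedness and condition $(c)$; then polarize $I(\mathcal D)$ to a squarefree monomial ideal $J$, for which $R/I(\mathcal D)$ is Cohen--Macaulay if and only if the polarized quotient is. It remains to show that $J$ is Cohen--Macaulay, equivalently that its Stanley--Reisner complex is (sequentially) Cohen--Macaulay under hypothesis $(c)$, for instance by exhibiting a shelling or a vertex decomposition. This last verification is the real crux, and is precisely the polarization route carried out in \cite{Ha-Lin-Morey-Reyes-Vila}.
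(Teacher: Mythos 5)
The first thing to note is that the paper contains no proof of this statement to compare yours against: Theorem~\ref{cm-weighted-oriented-whiskers} is quoted from \cite[Theorem~3.1]{Ha-Lin-Morey-Reyes-Vila}, with the explicit remark that it was conjectured in a preliminary version of the present paper and proved in that reference by polarization (and that $(b)\Leftrightarrow(c)$ also appears in \cite[Theorem~4.16]{PRT}). Measured as a free-standing argument, your $(a)\Rightarrow(b)$ is standard, and your $(b)\Rightarrow(c)$ is correct and self-contained: for $C=(\{x_1,\dots,x_r\}\setminus\{x_j\})\cup\{y_j,y_k\}$ one indeed has $L_3(C)=\{y_k\}$ (each $x_i$ with $i\neq j,k$ is excluded because $y_i\notin C$, while $x_k$ and $y_j$ are excluded because $x_j\notin C$), $x_k\in L_2(C)$ because the absence of $2$-cycles rules out $(x_k,x_j)$, and the strong-cover test at $y_k$ is passed by $x_k$ with $d_k\geq 2$; Corollary~\ref{prt-main} then yields an associated prime of height $r+1$ against the minimal prime $(x_1,\dots,x_r)$ of height $r$. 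This is a leaner variant of the swap used in the proof of Theorem~\ref{cm-weighted-oriented-trees}, which replaces the whole in-neighborhood $N_{\mathcal H}^-(x_k)$ rather than a single in-neighbor. (One small point to make explicit: the step ``$d_k\geq 2$ forces $x_k$ not to be a source'' relies on the paper's standing convention that sources are assigned weight $1$.)

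The genuine gap is $(c)\Rightarrow(a)$, which is the only hard implication. You correctly diagnose why the peeling induction of Theorem~\ref{cm-weighted-oriented-trees} does not transfer: when the core $\mathcal H$ contains a directed cycle it has no sink, and for a vertex $x_r$ with core out-neighbors the colon ideal $(I(\mathcal D):x_r^{d_r})$ no longer collapses to the displayed form, so the two-step exact-sequence argument breaks. But your substitute is only a programme: reduce weights to $\{1,2\}$ by Corollary~\ref{oct20-17}, polarize, and then ``show that the polarized squarefree ideal is Cohen--Macaulay, for instance by a shelling or a vertex decomposition.'' That last clause is precisely the content of \cite[Theorem~3.1]{Ha-Lin-Morey-Reyes-Vila}; deferring it to the reference means the implication is asserted, not proved. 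As written, the proposal establishes $(a)\Rightarrow(b)\Rightarrow(c)$ but not the converse chain, so the stated equivalence is not obtained. To close the gap you would have to actually construct the shelling or vertex decomposition of the Stanley--Reisner complex of the polarization, or find an unconditional peeling (for instance by ordering the strongly connected components of $\mathcal H$ and treating a terminal component directly).
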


The equivalence between $(b)$ and $(c)$ was also proved in
\cite[Theorem~4.16]{PRT}.

\begin{remark} If $\mathcal{D}$ is a Cohen--Macaulay weighted 
oriented graph, then $I(\mathcal{D})$ is unmixed and 
${\rm rad}(I(\mathcal{D}))$ is Cohen--Macaulay. This follows from 
the fact that Cohen--Macaulay ideals are unmixed and using a result of Herzog, Takayama and Terai
\cite[Theorem~2.6]{herzog-takayama-terai} which is valid for any
monomial ideal. It is an open question whether the converse is true
\cite[Conjecture~5.5]{PRT}.
\end{remark}

\begin{example} The radical of the ideal $I=(x_2x_1,x_3x_2^2,x_3x_4)$
is Cohen--Macaulay and $I$ is not unmixed. The irreducible components
of $I$ are $(x_1,x_3)$, $(x_2,x_3)$, $(x_1,x_2^2,x_4)$, $(x_2,x_4)$.
\end{example}

\begin{example}{(Terai)} The ideal $I=(x_1,x_2)^2\cap(x_2,x_3)^2\cap(x_3,x_4)^2$
is unmixed, ${\rm rad}(I)$ is Cohen-Macaulay, and $I$ is not 
Cohen--Macaulay.
\end{example}

\begin{acknowledgement} We would like to thank Ng\^{o} Vi\^{e}t Trung
and the referees for a careful reading of the paper and for the improvements suggested. 
The first, third and fourth authors were partially supported by
the Spanish {\it Ministerio de Econom\'\i a y Competitividad} grant
MTM2016-78881-P. 
The second and fourth authors
were supported by SNI. 
The fifth author was
supported by a scholarship from CONACYT
\end{acknowledgement}

\bibliographystyle{plain}

\end{document}